\definecolor{verylight}{gray}{0.97}
\definecolor{light}{gray}{0.9}
\definecolor{medium}{gray}{0.85}
\definecolor{dark}{gray}{0.6}
\def\NZQ{\mathbb}               
\def\ZZ{{\NZQ Z}}
\def\RR{{\NZQ R}}
\def\G{{\mathcal G}}
\def\pd{\textup{proj}\phantom{.}\!\textup{dim}}
\def\opn#1#2{\def#1{\operatorname{#2}}} 
\opn\chara{char} \opn\length{\ell} \opn\pd{pd} \opn\rk{rk}
\opn\projdim{proj\,dim} \opn\injdim{inj\,dim} \opn\rank{rank}
\opn\depth{depth} \opn\grade{grade} \opn\height{height}
\opn\embdim{emb\,dim} \opn\codim{codim}
\opn\Tr{Tr} \opn\bigrank{big\,rank}
\opn\superheight{superheight}\opn\lcm{lcm}
\opn\trdeg{tr\,deg}
\opn\reg{reg} \opn\lreg{lreg} \opn\ini{in} \opn\lpd{lpd}
\opn\size{size} \opn\sdepth{sdepth}
\opn\link{link}\opn\fdepth{fdepth}\opn\lex{lex}
\opn\tr{tr}
\opn\type{type}
\opn\gap{gap}
\opn\diam{diam}
\opn\Mod{Mod}
\opn\div{div} \opn\Div{Div} \opn\cl{cl} \opn\Cl{Cl}
\opn\Spec{Spec} \opn\Supp{Supp} \opn\supp{supp} \opn\Sing{Sing}
\opn\Ass{Ass} \opn\Min{Min}\opn\Mon{Mon}
\opn\Ann{Ann} \opn\Rad{Rad} \opn\Soc{Soc}
\opn\Im{Im} \opn\Ker{Ker} \opn\Coker{Coker} \opn\Am{Am}
\opn\Hom{Hom} \opn\Tor{Tor} \opn\Ext{Ext} \opn\End{End}
\opn\Aut{Aut} \opn\id{id}
\opn\nat{nat}
\opn\pff{pf}
\opn\Pf{Pf} \opn\GL{GL} \opn\SL{SL} \opn\mod{mod} \opn\ord{ord}
\opn\Gin{Gin} \opn\Hilb{Hilb}\opn\sort{sort}
\opn\PF{PF}\opn\Ap{Ap}
\opn\dist{dist}
\opn\aff{aff}
\opn\relint{relint} \opn\st{st}
\opn\lk{lk} \opn\cn{cn} \opn\core{core} \opn\vol{vol}  \opn\inp{inp} \opn\nilpot{nilpot}
\opn\link{link} \opn\star{star}\opn\lex{lex}\opn\set{set}
\opn\width{wd}
\opn\Fr{F}
\opn\QF{QF}
\opn\G{G}
\opn\type{type}\opn\res{res}
\opn\conv{conv}
\opn\sr{sr}
\opn\gr{gr}
\def\pot#1#2{#1[\kern-0.28ex[#2]\kern-0.28ex]}
\opn\dirlim{\underrightarrow{\lim}}
\opn\inivlim{\underleftarrow{\lim}}
\def\Implies{\ifmmode\Longrightarrow \else
	\unskip${}\Longrightarrow{}$\ignorespaces\fi}
\def\implies{\ifmmode\Rightarrow \else
	\unskip${}\Rightarrow{}$\ignorespaces\fi}
\def\iff{\ifmmode\Longleftrightarrow \else
	\unskip${}\Longleftrightarrow{}$\ignorespaces\fi}
\newtheorem{Theorem}{Theorem}[section]
\newtheorem{Lemma}[Theorem]{Lemma}
\newtheorem{Corollary}[Theorem]{Corollary}
\newtheorem{Proposition}[Theorem]{Proposition}
\newtheorem{Example}[Theorem]{Example}
\let\epsilon\varepsilon
\let\kappa=\varkappa
\def\qed{\ifhmode\textqed\fi
	\ifmmode\ifinner\hfill\quad\qedsymbol\else\dispqed\fi\fi}
\def\textqed{\unskip\nobreak\penalty50
	\hskip2em\hbox{}\nobreak\hfill\qedsymbol
	\parfillskip=0pt \finalhyphendemerits=0}
\def\dispqed{\rlap{\qquad\qedsymbol}}
\opn\dis{dis}
\def\pnt{{\raise0.5mm\hbox{\large\bf.}}}
\opn\Lex{Lex}
\opn\Max{Max}
\opn\Shad{Shad}
\opn\astab{astab}
\opn\v{v}
\begin{document}

	\title{The toric ring of one dimensional simplicial complexes}
	\author{Antonino Ficarra, J\"urgen Herzog, Dumitru I. Stamate}
	
	\address{Antonino Ficarra, Department of mathematics and computer sciences, physics and earth sciences, University of Messina, Viale Ferdinando Stagno d'Alcontres 31, 98166 Messina, Italy}
	\email{antficarra@unime.it}
	
	\address{J\"urgen Herzog, Fakult\"at f\"ur Mathematik, Universit\"at Duisburg-Essen, 45117 Essen, Germany} \email{juergen.herzog@uni-essen.de}
	
	\address{Dumitru I. Stamate, Faculty of Mathematics and Computer Science, University of Bucharest, Str. Academiei 14, Bucharest -- 010014, Romania }
	\email{dumitru.stamate@fmi.unibuc.ro}
	
	\thanks{Acknowledgment. This paper was written while the first and the third author visited the Faculty of Mathematics of Essen. D.I. Stamate was partly supported by a grant of the Ministry of Research, Innovation and Digitization, CNCS - UEFISCDI, project number PN-III-P1-1.1-TE-2021-1633, within PNCDI III.
	}
	
	\subjclass[2020]{Primary 13A02; 13P10; Secondary 05E40.}
	
	\keywords{toric rings, simplicial complexes, class group, canonical module}
	
	\maketitle
	
	\begin{abstract}
		Let $\Delta$ be a 1-dimensional simplicial complex. Then $\Delta$ may be identified with a finite simple graph $G$. In this article, we investigate the toric ring $R_G$ of $G$. All graphs $G$ such that $R_G$ is a normal domain are classified. For such a graph, we determine the set $\mathcal{P}_G$ of height one monomial prime ideals of $R_G$. In the bipartite case, and in the case of whiskered cycles, this set is explicitly described. As a consequence, we determine the canonical class $[\omega_{R_G}]$ and characterize the Gorenstein property of $R_G$. For a bipartite graph $G$, we show that $R_G$ is Gorenstein if and only if $G$ is unmixed. For a subclass of non-bipartite graphs $G$, which includes whiskered cycles, $R_G$ is Gorenstein if and only if $G$ is unmixed and has an odd number of vertices. Finally, it is proved that $R_G$ is a pseudo-Gorenstein ring if $G$ is an odd cycle.
	\end{abstract}
	
	\section*{Introduction}
	
	Let $\Delta$ be a simplicial complex on vertex set $[n]=\{1,2,\dots,n\}$. Typically, in Commutative Algebra, one associates to $\Delta$ the Stanley--Reisner ring $S/I_{\Delta}$, where $S=K[x_1,\dots,x_n]$, $K$ is a field and $I_\Delta$ is the Stanley--Reisner ideal of $\Delta$. The theory of Stanley--Reisner ideals has been deeply studied by many researchers. In \cite{HHMQ23a}, the authors introduced a different algebraic object attached to $\Delta$, which they called the toric ring of $\Delta$.
	
	Let $S=K[x_1,\dots,x_n]$ be the polynomial ring with coefficients in a field $K$. For a face $F\in\Delta$, we set ${\bf x}_F=\prod_{i\in F}x_i$ if $F$ is non-empty, otherwise we set ${\bf x}_\emptyset=1$. Then, the \textit{toric ring} of $\Delta$ is defined to be the $K$-subalgebra
	$$
	R_\Delta=K[{\bf x}_Ft:F\in\Delta]
	$$
	of $S[t]$. This algebra is standard graded if we put $\deg(x_1^{a_1}\cdots x_n^{a_n}t^b)=b$, for all monomials $x_1^{a_1}\cdots x_n^{a_n}t^b\in R_\Delta$. This concept was further extended to multicomplexes in \cite{HHMQ23b}, where discrete polymatroids were mainly considered. When $R_\Delta$ is a normal domain, its divisor class group $\textup{Cl}(R_\Delta)$ can be explicitly described in terms of the combinatorics of the pure 1-dimensional skeleton of $\Delta$. This skeleton may be viewed as a graph, which we denote by $G_\Delta$. With such data, one may compute the canonical class $[\omega_{R_\Delta}]$, that is, the class of the canonical module $\omega_{R_\Delta}$ in $\textup{Cl}(R_\Delta)$. Hence, $R_\Delta$ is Gorenstein if and only if $[\omega_{R_\Delta}]=0$. For a Noetherian normal domain $R$, this is one of the most efficient ways to check the Gorenstein property of $R$.
	
	In this article, we consider the toric ring of a 1-dimensional simplicial complex $\Delta$. In this case, the 1-dimensional facets of $\Delta$ are the edges of $G_\Delta$. On the other hand, given a graph $G$ on $[n]$, we may consider the simplicial complex $\Delta$ whose facets are the edges of $G$. Then $G=G_\Delta$. Therefore, we write $R_G$ instead of $R_\Delta$. With this notation, we have $R_G=K[t,x_1t,\dots,x_nt,\{{\bf x}_et\}_{e\in E(G)}]$. We always assume that $G$ has no isolated vertices. To compute the canonical class, one has to determine the set $\mathcal{P}_G$ of height one monomial prime ideals of $R_G$. This is a very difficult task. On the other hand, for the class of bipartite graphs and for certain non-bipartite graphs, including whiskered cycles, we are able to determine such a set. Then, we succeed in classifying the Gorenstein algebras among these classes.
	
	The outline of the article is as follows. In Section \ref{sec1-FHS23}, we summarize the main results proved in \cite{HHMQ23a} about the set $\mathcal{P}_\Delta$ of height one monomial prime ideals of $R_\Delta$. When $R_\Delta$ is normal, then $\omega_{R_\Delta}=\bigcap_{P\in\mathcal{P}_\Delta}P$. Thus, in principle, one can fairly explicitly compute the canonical module and the canonical class. By the facts (iii) and (v) recalled in Page \pageref{ref:iiipage}, $\mathcal{P}_\Delta$ always contains the following set of prime ideals $\mathcal{A}_\Delta=\{P_C:C\in\mathcal{C}(G_\Delta)\}\cup\{Q_1,\dots,Q_n\}$. For the precise definitions of the primes $P_C$ and $Q_i$ see Section \ref{sec1-FHS23}. It is natural to ask when $\mathcal{P}_\Delta=\mathcal{A}_\Delta$. If $R_\Delta$ is normal, this is equivalent to the fact that $\Delta$ is a flag complex and $G_\Delta$ is a perfect graph (Theorem \ref{Thm:PDeltaMin}).
	
	In Section \ref{sec2-FHS23}, we consider the rings $R_G$. In order to apply the machinery developed in Section \ref{sec1-FHS23}, we need to classify the graphs $G$ such that $R_G$ is normal. This is accomplished in Theorem \ref{Thm:RGNormal}. Such a result follows by noting that $R_G$ is isomorphic to the extended Rees algebra of the edge ideal $I(G)$ of $G$, as shown in \cite{DRV10}. Then, by using results in \cite{DRV10,HSV91,OH98,VPV23}, we show that $R_G$ is a normal domain if and only if $G$ has at most one non-bipartite connected component and this component satisfies the so-called odd cycle condition \cite{OH98}. Next, we investigate the set $\mathcal{P}_G$. It turns out that the monomial ideal $P_0=(t,x_1t,\dots,x_nt)$ is always a prime ideal of $R_G$ (Proposition \ref{Prop:extraprime}). For a connected graph $G$, it is proved in Theorem \ref{Thm:Gconnected} that $P_0$ is a non minimal prime ideal of $(t)$ if and only if $G$ is bipartite. These two facts are further equivalent to the property that $\mathcal{P}_G=\mathcal{A}_G$ (Theorem \ref{Thm:Gconnected}(d)). Thus, in the connected bipartite case we know precisely the set $\mathcal{P}_G$. Rephrasing this theorem, we obtain that $P_0$ is a minimal prime if and only if $G$ is non-bipartite (Corollary \ref{Cor:GPGNonBipartite}). 
	
	Hence, one is led to the problem of characterizing the connected non-bipartite graphs $G$ such that $\mathcal{P}_G=\mathcal{A}_G\cup\{P_0\}$. This problem is addressed in Theorem \ref{Thm:PGOddCycle}. For a connected graph $G$, we show that if $\mathcal{P}_G=\mathcal{A}_G\cup\{P_0\}$, then $G$ must be non-bipartite and for any induced odd cycle $G_0$ of $G$, we have that any vertex in $V(G)\setminus V(G_0)$ is adjacent to some vertex of $G_0$. We expect that the converse of this statement holds as well. However, at present we have only partial results supporting this expectation. Therefore, we restrict our attention to unicyclic graphs. In this particular case, we obtain that $\mathcal{P}_G=\mathcal{A}_G\cup\{P_0\}$ if and only if $G$ is a whiskered odd cycle (Theorem \ref{Thm:unicyclic}).
	
	Finally, in the last section we discuss the Gorenstein property of the rings $R_G$. By combining some of the results from \cite{HHMQ23a} a very general criterion for the Gorensteiness of $R_\Delta$ is stated (Theorem \ref{Thm:Georgio}). Then, we apply this result to our rings $R_G$, in the case that $G$ is bipartite or $G$ is an odd (whiskered) cycle. Finally, we prove that $R_{G}$ is pseudo-Gorenstein if $G$ is an odd cycle (Proposition \ref{Prop:Georgio3}).

	\section{Generalities about toric rings of simplicial complexes}\label{sec1-FHS23}
		In the section we summarize some basic facts from \cite{HHMQ23a} about toric rings of simplicial complexes. Let $K$ be a field. Then, the \textit{toric ring of a simplicial complex $\Delta$} on vertex set $[n]$ is defined as the toric ring
	$$
	R_\Delta=K[{\bf x}_Ft:F\in\Delta]\ \subset\ K[x_1,\dots,x_n,t],
	$$
	where we set ${\bf x}_F=\prod_{i\in F}x_i$, if $F$ is nonempty, and ${\bf x}_\emptyset=1$, otherwise.\smallskip
	
	We denote by $G_\Delta$ the graph on vertex set $[n]$ and whose edges are the 1-dimensional faces of $\Delta$. For a graph $G$, we denote by $\mathcal{C}(G)$ the set of the minimal vertex covers of $G$. For a subset $C\subseteq[n]$, we set $\Delta_C=\{F\in\Delta:F\subseteq C\}$.\smallskip
	
	Let $\mathcal{P}_\Delta$ be the set of height one monomial prime ideals of $R_\Delta$. We are interested in this set, because we have $\omega_{R_\Delta}=\bigcap_{P\in\mathcal{P}_\Delta}P$, if $R_\Delta$ is a normal ring, see \cite[Theorem 6.3.5(b)]{BH}. In particular, $[\omega_{R_\Delta}]=\sum_{P\in\mathcal{P}_\Delta}[P]$ in the divisor class group $\textup{Cl}(R_\Delta)$ of $R_\Delta$.\bigskip
	
	Next, we summarize what is known about the set $\mathcal{P}_\Delta$.\medskip
	
	\begin{enumerate}
		\item[(i)] Suppose that $R_\Delta$ is a normal domain. Let $P_1,\dots,P_r$ be the minimal monomial prime ideals of $(t)\subseteq R_\Delta$. Then the classes $[P_1],\dots,[P_r]$ generate the divisor class group $\textup{Cl}(R_\Delta)$ of $R_\Delta$. Furthermore $\textup{Cl}(R_\Delta)$ is free of rank $r-1$ \cite[Theorem 1.1 and Corollary 1.8]{HHMQ23a}.
		\item[(ii)] Let $P$ be a monomial prime ideal of $R_\Delta$, then the set $C=\{i:x_it\in P\}$ is a vertex cover of $G_\Delta$ \cite[Lemma 1.2]{HHMQ23a}.
		\item[(iii)]\label{ref:iiipage} If $C\subseteq[n]$ is a vertex cover of $G_\Delta$, then the ideal $P_C=({\bf x}_Ft:F\in\Delta_C)$ is a prime ideal containing $t$ and it is a minimal prime ideal if and only if $C\in\mathcal{C}(G_\Delta)$ \cite[Theorem 1.3 and Proposition 1.4]{HHMQ23a}.
		\item[(iv)] Not all minimal monomial prime ideals of $(t)$ are of the form $P_C$ for some $C\in\mathcal{C}(G_\Delta)$, see \cite[Example 1.5]{HHMQ23a}.
		\item[(v)] The set of height one monomial prime ideals of $R_\Delta$ not containing $t$ is $\{Q_1,\dots,Q_n\}$, with $Q_i=({\bf x}_Ft:F\in\Delta,i\in F)$ \cite[Proposition 1.9]{HHMQ23a}.
	\end{enumerate}\medskip
	
	By (iii) and (v), the set $\mathcal{P}_\Delta$ of height one monomial prime ideals of $R_\Delta$ contains the set $\{P_C:C\in\mathcal{C}(G_\Delta)\}\cup\{Q_1,\dots,Q_n\}$. In \cite[Theorem 1.10]{HHMQ23a} the authors characterized those simplicial complexes such that this set coincides with $\mathcal{P}_\Delta$ and determined the canonical class $[\omega_{R_\Delta}]$ in such a case \cite[Theorem 1.13]{HHMQ23a}.\medskip
	
	We recall that $\Delta$ is called \textit{flag} if all its minimal nonfaces are of dimension one. Equivalently, $\Delta$ is flag if and only if it is the clique complex of $G_\Delta$.
	
	\begin{Theorem}\label{Thm:PDeltaMin}
		Let $\Delta$ be a simplicial complex on $[n]$. Then, the following conditions are equivalent.
		\begin{enumerate}
			\item[\textup{(a)}] $R_\Delta$ is a normal ring and the set of height one monomial prime ideals of $R_\Delta$ is the set
			$$
			\mathcal{P}_\Delta\ =\ \{P_C:C\in\mathcal{C}(G_\Delta)\}\cup\{Q_1,\dots,Q_n\}.
			$$
			\item[\textup{(b)}] $\Delta$ is a flag complex and $G_\Delta$ is a perfect graph.
		\end{enumerate}
	    
	    Furthermore, if any of these equivalent conditions hold, we have
	    \begin{equation}\label{eq:omegaflagperfect}
	    	[\omega_{R_\Delta}]\ =\ \sum_{C\in\mathcal{C}(G)}(n+1-|C|)[P_C].
	    \end{equation}
	\end{Theorem}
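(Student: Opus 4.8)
The plan is to translate the problem into the facet geometry of the cone $\sigma=\RR_{\ge0}\{(\mathbf 1_F,1):F\in\Delta\}\subseteq\RR^{n+1}$, whose normal semigroup ring is the integral closure of $R_\Delta$; here $\mathbf 1_F\in\{0,1\}^n$ is the indicator vector of $F$ and the last coordinate records the $t$-degree. When $R_\Delta$ is normal, its height one monomial primes are in bijection with the facets of $\sigma$, the prime of a facet $\tau$ being generated by the monomials whose exponent vectors avoid $\tau$. I would first record an elementary reduction: in a normal domain a height one prime $P\ni t$ is automatically minimal over $(t)$, since by Krull's principal ideal theorem any minimal prime of $(t)$ contained in $P$ already has height one. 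Combined with (v), this shows that $\mathcal P_\Delta$ is the disjoint union of the minimal primes of $(t)$ and $\{Q_1,\dots,Q_n\}$; as the primes $P_C$ are always minimal over $(t)$ by (iii), condition (a) is equivalent to saying that \emph{every} minimal prime of $(t)$ equals $P_C$ for some $C\in\mathcal C(G_\Delta)$.

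Set $P'=\conv\{\mathbf 1_F:F\in\Delta\}\subseteq\RR^n$. Because $\mathbf 1_\emptyset=\mathbf 0\in P'$, the facets of $\sigma$ split into the $n$ coordinate facets $y_i=0$, which give the primes $Q_i$ not containing $t$, and the \emph{upper} facets, coming from supporting inequalities $\langle a,x\rangle\le b$ of $P'$ with $b>0$; the latter are exactly the minimal primes of $(t)$. For a minimal vertex cover $C$ with complementary maximal independent set $\overline C=[n]\setminus C$, the inequality $\sum_{i\in\overline C}x_i\le1$ is valid on $P'$ (a face of $\Delta$ meets the independent set $\overline C$ in at most one vertex), and the generators lying strictly below it are precisely the $\mathbf x_F t$ with $F\subseteq C$; thus, whenever it is facet defining, this inequality yields the prime $P_C$.

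The heart of the argument is to identify $P'$ with a stable set polytope. If $\Delta$ is flag, its faces are exactly the cliques of $G_\Delta$, i.e. the independent sets of the complement $\overline{G_\Delta}$, so $P'=\operatorname{STAB}(\overline{G_\Delta})$; conversely any minimal non-face $Q$ with $|Q|\ge3$ gives a vertex $\mathbf 1_Q$ of $\operatorname{STAB}(\overline{G_\Delta})$ that is cut off from $P'$ by the non-clique inequality $\sum_{i\in Q}x_i\le|Q|-1$, so flagness is equivalent to $P'=\operatorname{STAB}(\overline{G_\Delta})$. By the weak perfect graph theorem $G_\Delta$ is perfect iff $\overline{G_\Delta}$ is, and by the theorem of Fulkerson--Lov\'asz--Chv\'atal this holds iff $\operatorname{STAB}(\overline{G_\Delta})=\operatorname{QSTAB}(\overline{G_\Delta})$, that is, iff the only facets of the stable set polytope are the nonnegativity facets and the clique inequalities $\sum_{i\in\overline C}x_i\le1$. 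Putting these together, ``$\Delta$ flag and $G_\Delta$ perfect'' is precisely the statement that the upper facets of $P'$ are the $P_C$ and nothing else, which is condition (a). For the direction (b)$\Rightarrow$(a) I would also invoke the known fact that the stable set polytope of a perfect graph is compressed, hence normal, securing the normality of $R_\Delta$ needed to start the facet dictionary.

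Finally, granting $\mathcal P_\Delta=\{P_C:C\in\mathcal C(G_\Delta)\}\cup\{Q_1,\dots,Q_n\}$, the canonical class is a short divisor computation. Reading the primitive support functionals off the facets gives the valuations $\nu_{P_C}(y,s)=s-\sum_{i\in\overline C}y_i$ and $\nu_{Q_i}(y,s)=y_i$, so the principal divisors of $t=(\mathbf 0,1)$ and of $x_it=(\mathbf 1_{\{i\}},1)$ yield the relations $\sum_{C}[P_C]=0$ and $[Q_i]=-\sum_{C\ni i}[P_C]$ in $\textup{Cl}(R_\Delta)$. Substituting into $[\omega_{R_\Delta}]=\sum_{P\in\mathcal P_\Delta}[P]$ gives $\sum_C[P_C]+\sum_i[Q_i]=\sum_C(1-|C|)[P_C]$, and since $\sum_C[P_C]=0$ one may add $n\sum_C[P_C]$ to obtain \eqref{eq:omegaflagperfect}. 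The step I expect to be most delicate is the non-flag implication together with the normality claim: one must check that a minimal non-face really contributes a \emph{global} codimension one face of $P'$ distinct from every $P_C$, and that the stable set ring of a perfect graph is genuinely integrally closed---both resting on the finer geometry of stable set polytopes rather than on the semigroup formalism alone.
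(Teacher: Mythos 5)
The paper offers no proof of Theorem~\ref{Thm:PDeltaMin}: it is quoted from \cite[Theorems 1.10 and 1.13]{HHMQ23a}, so the only comparison available is with that citation and with the machinery the paper develops later in Section~\ref{sec4-FHS23}. Judged on its own, your argument is correct. The opening reduction (in a domain, a height one prime containing $t$ is minimal over $(t)$ by Krull's principal ideal theorem, and by \cite[Corollary 4.33]{BG2009} all minimal primes of the monomial ideal $(t)$ are monomial) correctly turns (a) into the assertion that the minimal primes of $(t)$ are exactly the $P_C$, $C\in\mathcal{C}(G_\Delta)$; the facet dictionary you invoke is the same one the paper recalls in Section~\ref{sec3-FHS23}; the identification ``$\Delta$ flag $\iff P'=\operatorname{STAB}(\overline{G_\Delta})$'' is argued correctly in both directions; and combining it with the Chv\'atal--Fulkerson--Lov\'asz theorem and the weak perfect graph theorem through the sandwich $P'\subseteq\operatorname{STAB}(\overline{G_\Delta})\subseteq\operatorname{QSTAB}(\overline{G_\Delta})$ settles the equivalence of the facet description with (b). Indeed, that sandwich makes (a)$\Rightarrow$(b) cleaner than your closing paragraph fears: you never need to verify that a minimal non-face contributes a specific new facet, since ``all upper facets lie among the clique inequalities'' already forces $P'=\operatorname{STAB}(\overline{G_\Delta})=\operatorname{QSTAB}(\overline{G_\Delta})$, giving flagness and perfectness at once. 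Your normality step in (b)$\Rightarrow$(a) --- stable set polytopes of perfect graphs are compressed, hence have the integer decomposition property, hence $R_\Delta$ is normal, with flagness guaranteeing that the generators of $R_\Delta$ are exactly the lattice points of $P'$ --- is also sound. Finally, your divisor computation is precisely the specialization of Lemma~\ref{Lemma:Georgio} and Corollary~\ref{Cor:Georgio1} to the support forms $f_C=-\sum_{i\notin C}x_i+x_{n+1}$ of \eqref{eq:Georgio}, and it reproduces \eqref{eq:omegaflagperfect}. What your route buys is a self-contained proof where the paper has only a reference; what it costs is reliance on three substantial external theorems --- the weak perfect graph theorem, the full equivalence ``perfect $\iff\operatorname{STAB}=\operatorname{QSTAB}$'' (you use both directions, including the harder converse), and the Ohsugi--Hibi/Sullivant compressedness theorem --- which would need to be cited precisely if this argument were to stand in place of the reference to \cite{HHMQ23a}.
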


	\section{The bipartite case}\label{sec2-FHS23}
	
	Let $G$ be a graph with no isolated vertices. In this section, we consider the algebras $R_G$.
	
	For a monomial $u=x_1^{a_1}\cdots x_n^{a_n}t^b\in R_G$, we set $\deg_{x_i}(u)=a_i$ for $1\le i\le n$, and $\deg_t(u)=b$. Moreover, if $e=\{i,j\}\in E(G)$, we set ${\bf x}_e=x_ix_j$. 
	
	\begin{Proposition}\label{Prop:extraprime}
		Let $G$ be any graph on $n$ vertices and let $R=R_G$. Then, the ideal $P_0=(t,x_1t,x_2t,\dots,x_nt)$ is a monomial prime ideal of $R$.
	\end{Proposition}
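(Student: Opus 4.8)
The plan is to exhibit $P_0$ as the irrelevant (positive-degree) ideal of a natural $\NN$-grading on $R$, so that $R/P_0$ is identified with the degree-zero subalgebra, which is manifestly a domain.

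Since $R=R_G$ is generated by monomials, it is the affine semigroup ring $K[\mathcal{S}]$, where $\mathcal{S}\subseteq\ZZ^{n+1}$ is the submonoid generated by the exponent vectors of $t$, of the $x_it$, and of the ${\bf x}_et$; in particular $R$ is a domain and $P_0$ is a monomial ideal. First I would introduce the group homomorphism $w\colon\ZZ^{n+1}\to\ZZ$ given on a monomial by $w(x_1^{a_1}\cdots x_n^{a_n}t^b)=2b-\sum_{i=1}^n a_i$. Evaluating $w$ on the three types of algebra generators yields $w(t)=2$, $w(x_it)=1$ and $w({\bf x}_et)=0$. Because $w$ is additive under multiplication of monomials and nonnegative on every generator, it induces an $\NN$-grading $R=\bigoplus_{d\ge 0}R_d$, whose degree-zero part $R_0$ is the $K$-subalgebra spanned by the products $\prod_{e\in E(G)}({\bf x}_et)^{c_e}$.

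The core of the argument is to prove $P_0=R_+:=\bigoplus_{d\ge 1}R_d$. The inclusion $P_0\subseteq R_+$ is clear, since the generators $t$ and $x_it$ all have positive $w$-degree. For the reverse inclusion I would take a monomial $u\in R$ with $w(u)\ge 1$ and write it as a product of algebra generators; if every factor were an edge monomial ${\bf x}_et$, additivity of $w$ would force $w(u)=0$, so at least one factor must be $t$ or some $x_it$. Deleting that factor expresses $u$ as a multiple in $R$ of a generator of $P_0$, hence $u\in P_0$. Thus $P_0=R_+$.

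It then follows that $R/P_0=R/R_+\cong R_0$, and $R_0$ is a subring of the polynomial ring $K[x_1,\dots,x_n,t]$, hence a domain; therefore $P_0$ is prime, and it is monomial by construction. The one delicate point, which the grading is designed to circumvent, is that a monomial of $R$ can admit several factorizations into generators: a priori a product of edge monomials might be rewritten so as to involve a factor $t$ or $x_it$, which would place it in $P_0$. The invariance of $w(u)$ across all such factorizations---guaranteed because $w$ is the restriction of a homomorphism defined on the ambient lattice $\ZZ^{n+1}$---is precisely what excludes this and pins down $R_0$ as exactly the span of the edge-monomial products. I would state this well-definedness explicitly before carrying out the two inclusions.
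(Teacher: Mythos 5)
Your proof is correct, but it packages the argument differently from the paper's, so a comparison is worthwhile. The paper proves primeness directly via the standard criterion for monomial ideals in a semigroup ring: it suffices that the product of any two monomials outside $P_0$ stays outside $P_0$. There, if $u,v\notin P_0$, then $uv$ is a product of $r$ edge generators ${\bf x}_{e}t$, and a degree count --- any product of $r$ generators of $R$ has total $x$-degree at most $2r$, while its $t$-degree is exactly $r$ --- shows that $uv$ cannot be a multiple of $t$ or of any $x_jt$. Your weight $w(x_1^{a_1}\cdots x_n^{a_n}t^b)=2b-\sum_i a_i$ is precisely this count, but you globalize it: since $w$ is a linear functional on the ambient lattice $\ZZ^{n+1}$ that is nonnegative on every generator, it induces an $\NN$-grading of $R$, and you identify $P_0$ with the irrelevant ideal $R_+$, which is prime because $R/R_+\cong R_0$ is a subring of $K[x_1,\dots,x_n,t]$, hence a domain. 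What your route buys: the non-uniqueness of factorizations into generators (the delicate point in any such argument) is dispatched once and for all because $w$ is defined on the lattice rather than on factorizations, and you obtain for free the structural identification $R/P_0\cong R_0=K[{\bf x}_et : e\in E(G)]$, the edge subring of $G$, which is more information than primeness alone. What the paper's route buys: it is entirely self-contained and elementary, invoking only the multiplicative criterion for monomial primes and no grading formalism. At bottom, both proofs rest on the same inequality $\sum_i\deg_{x_i}(u)\le 2\deg_t(u)$ for monomials $u\in R$, with equality exactly when $u$ is a product of edge monomials.
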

	\begin{proof}
		Since $P_0$ is a monomial ideal, it is enough to prove that for any two monomials $u,v$ not belonging to $P_0$, then the product $uv$ is also not in $P_0$.  Since $u,v\notin P_0$ and $R=K[t,\{x_it\}_{i\in V(G)},\{{\bf x}_et\}_{e\in E(G)}]$, it follows that $uv=\prod_{k=1}^r({\bf x}_{e_k}t)$ for some edges $e_1,\dots,e_r$, not necessarily distinct. Suppose by contradiction that $uv\in P_0$, then $t$ divides $uv$ or $x_jt$ divides $uv$ for some $j$.
		
		In the first case, $uv=tw$ for a suitable monomial $w$. In particular, $\deg_t(w)=r-1$ and $\sum_{i=1}^n\deg_{x_i}(w)=\sum_{i=1}^n\deg_{x_i}(uv)=2r$. Since $\deg_t(w)=r-1$, $w$ is a product of $r-1$ generators of $R$ and we have $\sum_{i=1}^n\deg_{x_i}(w)\le 2(r-1)$, absurd.
		
		Similarly, in the second case we could write $uv=(x_jt)w$ and $\sum_{i=1}^n\deg_{x_i}(w)=\sum_{i=1}^n\deg_{x_i}(uv)-\deg_{x_j}(x_jt)=2r-1$. This is again impossible because $w$ is a product of $r-1$ generators of $R$ and $\sum_{i=1}^n\deg_{x_i}(w)$ is at most $2(r-1)$.
	\end{proof}\medskip
	
	Let $S=K[x_1,\dots,x_n]$ be the standard graded polynomial ring. For a graph $G$, the \textit{edge ideal of $G$} is the ideal $I(G)$ generated by all monomials ${\bf x}_e$ with $e\in E(G)$. Set $I=I(G)$. Recall that the \textit{Rees algebra of $I$} is the $K$-algebra
	$$
	S[It]=\bigoplus_{j\ge0}I^jt^j=K[x_1,\dots,x_n,\{{\bf x}_et\}_{e\in E(G)}]\subset S[t],
	$$
	and the \textit{associated graded ring of $I$} is defined as $\textup{gr}_I(S)=S[It]/IS[It]$.
	
	Whereas, the \textit{extended Rees algebra of $I(G)$} is defined as
	$$
	S[It,t^{-1}]=S[It][t^{-1}]\subset S[t,t^{-1}].
	$$
	
	We have the isomorphism $\varphi:S[It,t^{-1}]\rightarrow R_G$ established by setting $\varphi(t^{-1})=t$, $\varphi(x_i)=x_it$ for $1\le i\le n$, and $\varphi({\bf x}_et)={\bf x}_et$ for $e\in E(G)$, see \cite[Proposition 3.1]{DRV10}.\smallskip
	
	As a first consequence, we  classify all graphs $G$ such that $R_G$ is a normal domain. For this purpose, we recall that a connected graph $G$ is said to satisfy the \textit{odd cycle condition} if for any two induced odd cycles $C_1$ and $C_2$ of $G$, either $C_1$ and $C_2$ have a common vertex or there exist $i\in V(C_1)$ and $j\in V(C_2)$ such that $\{i,j\}\in E(G)$.
	
	\begin{Theorem}\label{Thm:RGNormal}
		Let $G$ be any graph. Then $R_G$ is a normal domain if and only if at most one connected component of $G$ is non-bipartite and this connected component satisfies the odd cycle condition.
	\end{Theorem}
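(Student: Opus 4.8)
The plan is to transport the problem, via the isomorphism $\varphi\colon S[It,t^{-1}]\to R_G$ recalled above from \cite{DRV10}, into a statement about the edge ideal $I=I(G)$, and then to invoke the combinatorial characterization of normality for edge ideals. First I would observe that $R_G$ is automatically a domain, since it is a subalgebra of the polynomial ring $S[t]$. Hence only the normality (integral closedness) of $R_G$ is at issue, and through $\varphi$ this is equivalent to the normality of the extended Rees algebra $S[It,t^{-1}]$.

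Next I would reduce the normality of the extended Rees algebra to the normality of the ideal $I$ itself. The integral closure of $S[It,t^{-1}]$ inside $S[t,t^{-1}]$ is the $\ZZ$-graded ring $\bigoplus_{n\in\ZZ}\overline{I^{\,n}}\,t^n$, where $\overline{I^{\,n}}$ denotes the integral closure of $I^n$ and we set $\overline{I^{\,n}}=S$ for $n\le 0$ (using that the polynomial ring $S$ is normal). Consequently $S[It,t^{-1}]$ is integrally closed if and only if $\overline{I^{\,n}}=I^n$ for every $n\ge 1$, that is, if and only if $I=I(G)$ is a normal ideal; equivalently, the ordinary Rees algebra $S[It]$ is normal. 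This is the point where results from \cite{HSV91} enter.

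The heart of the matter is then the combinatorial criterion for the normality of an edge ideal. By \cite{OH98} (see also \cite{VPV23}), $I(G)$ is normal if and only if $G$ satisfies the odd cycle condition in the global sense that any two induced odd cycles of $G$ either share a vertex or are joined by an edge. It remains to rephrase this global condition in the form stated in the theorem. Since an induced odd cycle lies in a single connected component, and two odd cycles in distinct components can neither share a vertex nor be joined by an edge, the global condition forces all odd cycles of $G$ to lie in one component; that is, at most one connected component is non-bipartite, and on that component the condition becomes exactly the odd cycle condition for a connected graph as defined above. Conversely, if at most one component is non-bipartite and it satisfies the odd cycle condition, then every induced odd cycle lies in that one component and the global condition holds. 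Combining these equivalences yields the theorem.

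The main obstacle is the combinatorial normality criterion of the third paragraph, which is the deep external input; in our own reasoning the delicate point is the passage between the global odd cycle condition and the component-wise formulation, where one must confirm that vertex-disjoint odd cycles in different components genuinely obstruct normality. For example, for $G=C_3\sqcup C_3$ on vertices $x_1,x_2,x_3$ and $y_1,y_2,y_3$, the monomial $x_1x_2x_3y_1y_2y_3$ lies in $\overline{I^{\,3}}$ but not in $I^3$, so $I(G)$ fails to be normal; this shows that the restriction to a single non-bipartite component is essential and not an artifact of the definition.
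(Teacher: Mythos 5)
Your proposal is correct, and it reaches the theorem by a slightly different decomposition than the paper's. Both arguments make the same first move: $R_G$ is a domain for free, and via $\varphi$ its normality is equivalent to normality of the edge ideal $I=I(G)$; your graded description of the integral closure $\bigoplus_{n\in\ZZ}\overline{I^{\,n}}\,t^n$ is exactly what \cite[Proposition 2.1.2]{HSV91} packages. The difference is in the combinatorial half. The paper never applies an odd-cycle criterion to a disconnected graph: it first quotes \cite[Theorem 8.21]{VPV23} to reduce to a single non-bipartite component $G_i$ with $I(G_i)$ normal, then passes from the ideal $I(G_i)$ to the edge ring $K[I(G_i)]$ by \cite[Theorem 3.3]{DRV10}, and only then invokes \cite[Corollary 2.3]{OH98}, which is a statement about edge rings of connected graphs. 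You instead invoke a single global criterion for edge ideals of arbitrary graphs --- $I(G)$ is normal if and only if any two vertex-disjoint (induced) odd cycles of $G$ are joined by an edge --- and then prove the purely combinatorial equivalence between this global condition and the component-wise formulation of the theorem. That equivalence argument is correct (two induced odd cycles in distinct components can neither meet nor be joined by an edge, and conversely all induced odd cycles lie in the unique non-bipartite component), and your $C_3\sqcup C_3$ computation is a genuine verification that vertex-disjoint odd cycles in different components obstruct normality of the ideal.

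The one thing you must fix is the attribution of the key external input. \cite[Corollary 2.3]{OH98} concerns normality of the edge \emph{ring} $K[G]$ of a connected graph, not the edge \emph{ideal} of an arbitrary graph, and the distinction is precisely the point at issue for disconnected graphs: $K[C_3\sqcup C_3]\cong K[C_3]\otimes_K K[C_3]$ is a normal ring, while, as your own example shows, $I(C_3\sqcup C_3)$ is not a normal ideal. So \cite{OH98} as stated cannot deliver the criterion in your third paragraph; applied naively component by component it would even give the wrong answer. The global edge-ideal criterion you want is the theorem of Simis--Vasconcelos--Villarreal on the integral closure of Rees algebras of edge ideals (see \cite{RV}, and it is also surveyed in \cite{VPV23}); citing that instead makes your argument complete. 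In exchange, your route has a merit the paper's does not: the reduction from the global odd cycle condition to the component-wise statement is done by hand, so only one black box is needed rather than the paper's chain of three.
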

	\begin{proof}
		Let $I=I(G)$. By \cite[Proposition 2.1.2]{HSV91}, the Rees algebra $S[It]$ is normal if and only if the extended Rees algebra $S[It,t^{-1}]$ is normal. Since $R_G\cong S[It,t^{-1}]$, it follows that $R_G$ is normal if and only if $S[It]$ is normal. It is well--known that this is the case, if and only if $I$ is a normal ideal. By \cite[Theorem 8.21]{VPV23}, $I$ is normal if and only if $G$ has at most one non-bipartite connected component $G_i$ and $I(G_i)$ is a normal ideal. By \cite[Theorem 3.3]{DRV10}, $I(G_i)$ is normal if and only $S[I(G_i)t]$ is normal if and only if the toric ring $K[I(G_i)]$ is normal. By \cite[Corollary 2.3]{OH98} this is the case if and only if $G_i$ satisfies the odd cycle condition. The assertion follows.
	\end{proof}\smallskip
	
	Next, we want to algebraically characterize the set of height one monomial prime ideals of $R_G$, for a connected graph $G$. For this aim, note that
	\begin{equation}\label{eq:grvarphi}
		\textup{gr}_I(S)=\frac{S[It]}{IS[It]}\cong\frac{S[It,t^{-1}]}{t^{-1}S[It,t^{-1}]}\cong\frac{R_G}{(t)R_G},
	\end{equation}
	because $t^{-1}$ is mapped to $t$ under the isomorphism $\varphi$.\medskip
	
	\begin{Theorem}\label{Thm:Gconnected}
		Let $G$ be a connected graph with $n$ vertices. Then, the following conditions are equivalent.
		\begin{enumerate}
			\item[\textup{(a)}] The associated graded ring $\textup{gr}_{I(G)}(S)$ is reduced.
			\item[\textup{(b)}] The ideal $(t)\subset R_G$ is radical.
			\item[\textup{(c)}] $G$ is a bipartite graph.
			\item[\textup{(d)}] The set
			$$
			\{P_C:C\in\mathcal{C}(G)\}\cup\{Q_1,\dots,Q_n\}
			$$
			is the set of height one monomial prime ideals of $R_G$.
			\item[\textup{(e)}] The ideal $P_0=(t,x_1t,\dots,x_nt)\subset R_G$ is not a minimal prime of $(t)$.
		\end{enumerate}
	    If any of the above equivalent conditions hold, then $R_G$ is a normal domain.
	\end{Theorem}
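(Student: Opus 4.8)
The plan is to treat (a) $\Leftrightarrow$ (b) as a formality, to make the equivalence of bipartiteness with reducedness the analytic core, and then to tie in (d) and (e) through the combinatorics of the minimal primes of $(t)$, via the route $(a)\Leftrightarrow(b)\Leftrightarrow(c)$ together with the cycle $(c)\Rightarrow(d)\Rightarrow(e)\Rightarrow(c)$. First, $(a)\Leftrightarrow(b)$ is immediate from \eqref{eq:grvarphi}: since $\textup{gr}_I(S)\cong R_G/(t)R_G$, the ring $\textup{gr}_I(S)$ is reduced exactly when $(t)$ is radical.

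The heart of the matter is $(b)\Leftrightarrow(c)$, and here I would exploit that $R_G=K[H_G]$ is the affine semigroup ring on $H_G\subset\ZZ^{n}\times\ZZ_{\ge0}$ generated by the exponent vectors of $t$, the $x_it$, and the ${\bf x}_et$. All generators have last coordinate $1$, so a monomial $x^at^b$ lies in $R_G$ precisely when $b\ge b_{\min}(a)$, where $b_{\min}(a)$ is the least number of generators producing $x^a$. Writing $a$ as a nonnegative integer combination of the vectors ${\bf 1}_i$ and ${\bf 1}_e={\bf 1}_i+{\bf 1}_j$ and eliminating the singletons, one finds $b_{\min}(a)=|a|-\nu_G(a)$, where $|a|=\sum_i a_i$ and $\nu_G(a)=\max\{\sum_e y_e : y\ge 0 \text{ integral},\ \sum_{e\ni i}y_e\le a_i\ \forall i\}$ is the maximum $a$-bounded matching number. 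A monomial lies outside $(t)$ exactly when $b=b_{\min}(a)$, and then $(x^at^b)^m\in(t)$ iff $\nu_G(ma)>m\,\nu_G(a)$. Hence $(t)$ is radical if and only if $\nu_G(ma)=m\,\nu_G(a)$ for all $a$ and all $m\ge1$, i.e. if and only if the fractional relaxation of $\nu_G$ is integral for every $a$. The incidence matrix of $G$ is totally unimodular exactly when $G$ is bipartite, which settles $(c)\Rightarrow(b)$; conversely, if $G$ is non-bipartite, taking $a$ to be the indicator vector of the vertices of an induced odd cycle $C_{2k+1}$ gives $\nu_G(2a)=2k+1>2k=2\nu_G(a)$, producing an explicit nilpotent and hence $\neg(b)$.

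For $(c)\Rightarrow(d)$: a connected bipartite $G$ is triangle-free, so $\Delta$ is flag, and bipartite graphs are perfect; moreover $R_G$ is normal by Theorem \ref{Thm:RGNormal}. Thus Theorem \ref{Thm:PDeltaMin} applies and yields exactly the list in (d). For $(d)\Rightarrow(e)$: were $P_0$ a minimal prime of $(t)$, it would have height one (Krull, $R_G$ a domain), hence appear in that list; but $t\in P_0$ while each $Q_i$ omits $t$, and $P_0\ne P_C$ for every $C\in\mathcal{C}(G)$, since a minimal cover $C$ omits some vertex $i$ and then $x_it\in P_0\setminus P_C$. For $(e)\Rightarrow(c)$: the quotient $R_G/(t)\cong\textup{gr}_I(S)$ is equidimensional of dimension $n$, as every minimal prime of the principal ideal $(t)$ in the affine domain $R_G$ has height one and quotient of dimension $n$; on the other hand, under \eqref{eq:grvarphi} the prime $P_0$ maps to the ideal generated by the $x_it$, whose quotient is the edge subring $K[I(G)]$, of dimension $n-1$ when $G$ is bipartite and $n$ when $G$ is non-bipartite. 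Thus $P_0$ is a minimal prime precisely when $G$ is non-bipartite, so (e) forces $G$ bipartite. Finally, under any of the equivalent conditions $G$ is bipartite, so $R_G$ is a normal domain by Theorem \ref{Thm:RGNormal}, proving the last assertion.

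The main obstacle is the direction $(c)\Rightarrow(b)$, that bipartiteness forces $(t)$ to be radical: this is exactly where one must identify $b_{\min}(a)$ with the matching number and invoke the total unimodularity of the bipartite incidence matrix (equivalently, one could cite that $I(G)$ is normally torsion-free for bipartite $G$). Setting up the semigroup description of $R_G$ and the formula $b_{\min}(a)=|a|-\nu_G(a)$ is the technical crux; once this and Theorems \ref{Thm:RGNormal} and \ref{Thm:PDeltaMin} are in place, the remaining implications are comparatively formal.
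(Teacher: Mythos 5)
Your proposal is correct and follows the paper's skeleton --- the same cycle $(a)\Leftrightarrow(b)$, then bipartiteness, then $(c)\Rightarrow(d)\Rightarrow(e)\Rightarrow(c)$, with $(c)\Rightarrow(d)$ and $(d)\Rightarrow(e)$ argued exactly as in the paper (flag complex, perfect graph, Theorem \ref{Thm:PDeltaMin}, then exclusion of $P_0$ from the list in (d)) --- but two implications are established by genuinely different means. For the equivalence of reducedness with bipartiteness, the paper simply cites \cite[Proposition 14.3.39]{RV}, whereas you reprove that result from scratch: the semigroup description of $R_G$, the identity $b_{\min}(a)=|a|-\nu_G(a)$, the reformulation of radicality of $(t)$ as $\nu_G(ma)=m\,\nu_G(a)$, total unimodularity of bipartite incidence matrices in one direction, and the explicit odd-cycle nilpotent $\mathbf{x}_{V(C_{2k+1})}t^{k+1}$ in the other. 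This buys self-containedness and exhibits the nilpotents explicitly, at the cost of importing integer-programming facts (total unimodularity, integrality of the relaxation, and the fact that radicality of a monomial ideal can be tested on monomials) that a final write-up would need to reference. For $(e)\Rightarrow(c)$, the paper's argument is elementary but intricate: it takes a minimal prime $Q\subsetneq P_0$ of $(t)$, shows $Q=(t,\{x_it\}_{i\in D})$ with $D$ a proper vertex cover, and uses an induced odd cycle together with a shortest-path argument to contradict the primality of $Q$. You instead argue by dimension: minimal primes of $(t)$ have quotient dimension $n$ (Krull's principal ideal theorem plus the dimension formula for affine domains), while $R_G/P_0\cong K[I(G)]$ has dimension $n-1$ in the connected bipartite case and $n$ in the non-bipartite case, so $P_0$ is minimal over $(t)$ exactly when $G$ is non-bipartite. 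This is cleaner, yields Corollary \ref{Cor:GPGNonBipartite} along the way, and avoids the path argument entirely; its cost is that it leans on the standard dimension formula for edge subrings (which encodes the same bipartite dichotomy, so the combinatorics is shifted into a cited fact rather than eliminated) and on the identification $R_G/P_0\cong K[I(G)]$, which you should justify explicitly --- it follows from the proof of Proposition \ref{Prop:extraprime}, since the monomials outside $P_0$ are precisely the products of edge generators. Both of your substitutions are sound.
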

	\begin{proof}
		We prove the implications (a)$\iff$(b), (a)$\iff$(c) and (c)$\Rightarrow$(d)$\Rightarrow$(e)$\Rightarrow$(c).
		
		By equation (\ref{eq:grvarphi}) the equivalence (a)$\iff$(b) follows. The equivalence (a)$\iff$(c) is shown in \cite[Proposition 14.3.39]{RV}.
		
		Now, assume (c). Since $G$ is bipartite, it follows that $G$ does not have odd cycles. Thus $R_G$ is a normal domain by Theorem \ref{Thm:RGNormal}. In particular, $G$ is triangle-free. Hence $G$ is a flag complex and a perfect graph, because it is bipartite. Thus, statement (d) follows from Theorem \ref{Thm:PDeltaMin}(b)$\Rightarrow$(a). If (d) holds, then $P_0$ is a monomial prime ideal (Proposition \ref{Prop:extraprime}), but is a not a minimal prime of $(t)$, because $P_0$ is not of the form $P_C$ for any minimal vertex cover $C\in\mathcal{C}(G)$. Statement (e) follows.
		
		Finally, assume (e) and suppose by contradiction that $G$ is non-bipartite. Then $G$ has at least one induced odd cycle $G_1$. By Proposition \ref{Prop:extraprime}, $P_0$ is a monomial prime ideal. By \cite[Corollary 4.33]{BG2009}, the minimal primes ideals containing $(t)$ are monomial prime ideals. Thus, by hypothesis (e), there exists a proper subset $D$ of $V(G)$ such that $Q=(t,\{x_it\}_{i\in D})$ is a minimal prime of $(t)$ and $Q\subsetneq P_0$. It follows that $D$ is a vertex cover of $G$. In particular, $D\cap V(G_1)$ is a vertex cover of $G_1$. Since $G_1$ is an odd cycle, $D$ must contain two adjacent vertices $i,j\in V(G_1)$. Recall that the \textit{distance} of two vertices $p,q\in V(G)$ is defined to be the number $d(p,q)=r$ if there exists a path from $p$ to $q$ of length $r$, that is, a sequence of $r+1$ distinct vertices $p=v_0,v_1,\dots,v_{r-1},v_r=q$ of $G$ such that $\{v_i,v_{i+1}\}\in E(G)$, and no shorter path from $p$ to $q$ exists. If no path between $p$ and $q$ exists, we set $d(p,q)=+\infty$.
		
		Since $G$ is connected and $V(G)\setminus D\ne\emptyset$, the number
		$$
		m\ =\ \min\{d(k,i):k\in V(G)\setminus D\}
		$$
		exists and is finite.
		
		Let $k\in V(G)\setminus D$ such that $d(k,i)=m$. Then, there exists a path of lenght $m$, $i=v_0,v_1,\dots,v_{m-1},v_m=k$. By definition of $m$, it follows that $v_0,v_1,\dots,v_{m-1}\in D$.
		
		If $m\ge2$, then $\{v_{m-2},v_{m-1}\},\{v_{m-1},v_m\}\in E(G)$. Now, $x_{v_{m-2}}x_{v_{m-1}}t,x_{v_m}t\notin Q$, but $(x_{v_{m-2}}x_{v_{m-1}}t)(x_{v_m}t)=(x_{v_{m-2}}t)(x_{v_{m-1}}x_{v_m}t)\in Q$ because $x_{v_{m-2}}t\in Q$. This is a contradiction.
		
		If $m=1$, then $v_1=k$ and $\{i,j\},\{i,k\}\in E(G)$. We have that $x_ix_jt,x_kt\notin Q$. However, $(x_ix_jt)(x_kt)=(x_jt)(x_ix_kt)\in Q$, because $x_jt\in Q$. Again a contradiction. Therefore, $G$ must be bipartite and (c) follows.
		
		Finally, under the equivalent conditions (a)-(e), $G$ is connected and bipartite. The normality of $R_G$ follows from Theorem \ref{Thm:RGNormal}.
	\end{proof}

	An immediate consequence of this result is the following corollary.
	
	\begin{Corollary}\label{Cor:GPGNonBipartite}
		Let $G$ be a connected graph with $n$ vertices. Then $G$ is non-bipartite if and only if
		$$
		(t,x_1t,\dots,x_nt)\in\mathcal{P}_G.
		$$
	\end{Corollary}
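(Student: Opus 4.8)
The plan is to reduce the statement to the equivalence \textup{(c)}$\iff$\textup{(e)} of Theorem \ref{Thm:Gconnected}, by translating the condition $P_0\in\mathcal{P}_G$ into the condition that $P_0$ is a minimal prime of $(t)$. Recall that $R_G$ is a domain, being a subalgebra of $S[t]$, and that by Proposition \ref{Prop:extraprime} the ideal $P_0=(t,x_1t,\dots,x_nt)$ is a monomial prime ideal of $R_G$. Since $P_0\supseteq(t)$ with $t\neq 0$, we have $\height P_0\geq 1$, so the only issue is to decide when $\height P_0=1$, i.e.\ when $P_0$ lies in the set $\mathcal{P}_G$ of height one monomial prime ideals.

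The key step is the following standard fact about a Noetherian domain: for a prime ideal $P$ containing the nonzero element $t$, one has $\height P=1$ if and only if $P$ is a minimal prime of $(t)$. First I would note that by Krull's principal ideal theorem every minimal prime of the principal ideal $(t)$ has height at most one, and since $t$ is a nonzero element of the domain $R_G$, such a minimal prime has height exactly one. Conversely, if $P\supseteq(t)$ is \emph{not} minimal over $(t)$, then $P$ strictly contains some minimal prime $Q$ of $(t)$; as $Q\supseteq(t)\neq(0)$ we get $\height Q\geq 1$, whence $\height P\geq\height Q+1\geq 2$. Applying this with $P=P_0$ yields precisely: $P_0\in\mathcal{P}_G$ if and only if $P_0$ is a minimal prime of $(t)$.

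Finally I would invoke the equivalence \textup{(c)}$\iff$\textup{(e)} of Theorem \ref{Thm:Gconnected}, which holds for any connected graph $G$: condition \textup{(e)} asserts that $P_0$ is \emph{not} a minimal prime of $(t)$, and this is equivalent to $G$ being bipartite. Negating both sides, $P_0$ is a minimal prime of $(t)$ if and only if $G$ is non-bipartite. Combining this with the characterization of the previous paragraph gives $P_0\in\mathcal{P}_G$ if and only if $G$ is non-bipartite, which is the assertion of the corollary. I do not anticipate any genuine obstacle here, since the heavy lifting is already contained in Theorem \ref{Thm:Gconnected}; the only point needing a modicum of care is the height-one characterization in a Noetherian domain, but this is entirely routine.
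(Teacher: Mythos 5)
Your proposal is correct and follows exactly the route the paper intends: the paper states the corollary as an immediate consequence of Theorem \ref{Thm:Gconnected}, namely the equivalence \textup{(c)}$\iff$\textup{(e)}, combined with Proposition \ref{Prop:extraprime}. Your only addition is to write out the routine bridging fact that, in the Noetherian domain $R_G$, a prime containing $t$ has height one precisely when it is minimal over $(t)$, which is exactly the step the paper leaves implicit.
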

	
	\section{The non-bipartite case}\label{sec3-FHS23}
	
	By Corollary \ref{Cor:GPGNonBipartite}, if $G$ is a connected non-bipartite graph on $n$ vertices, we have the inclusion
	\begin{equation}\label{eq:inclusionPG}
		\{P_C:C\in\mathcal{C}(G)\}\cup\{(t,x_1t,\dots,x_nt)\}\cup\{Q_1,\dots,Q_n\}\subseteq\mathcal{P}_G.
	\end{equation}
	
	Thus, it would be interesting to characterize those connected graphs such that equality in (\ref{eq:inclusionPG}) holds. As a first step, we have the following result.
	\begin{Theorem}\label{Thm:PGOddCycle}
		Let $G$ be a connected graph on $n$ vertices such that $R_G$ is a normal domain. Consider the following statements.
		\begin{enumerate}
			\item[\textup{(a)}] The set
			$$
			\mathcal{P}_G=\{P_C:C\in\mathcal{C}(G)\}\cup\{(t,x_1t,x_2t,\dots,x_nt)\}\cup\{Q_1,\dots,Q_n\}
			$$
			is the set of height one monomial prime ideals of $R_G$.
			\item[\textup{(b)}] $G$ is non-bipartite and for any induced odd cycle $G_0$ of $G$, we have that any vertex in $V(G)\setminus V(G_0)$ is adjacent to some vertex of $G_0$.
		\end{enumerate}
	    Then, \textup{(a)} implies \textup{(b)}.
	\end{Theorem}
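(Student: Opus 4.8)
The plan is to assume (a) and derive the two parts of (b) separately. The non-bipartite part is immediate: under (a) the extra ideal $P_0=(t,x_1t,\dots,x_nt)$ is listed among the height one monomial primes, so $P_0\in\mathcal{P}_G$, and Corollary \ref{Cor:GPGNonBipartite} then gives that $G$ is non-bipartite. For the cycle condition I would argue by contradiction: assume there is an induced odd cycle $G_0$, with vertex set $V_0$, and a vertex $w\in V(G)\setminus V_0$ with no neighbour in $V_0$, and construct a height one monomial prime of $R_G$ not appearing in the list of (a).

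Since $R_G$ is a normal domain, it equals the affine monoid ring $K[C\cap\ZZ^{n+1}]$, where $C\subseteq\RR^{n+1}$ is the pointed cone generated by the exponent vectors of the generators of $R_G$: the vector $\mathbf{e}_{n+1}$ for $t$, the vector $\mathbf{e}_i+\mathbf{e}_{n+1}$ for $x_it$, and $\mathbf{e}_i+\mathbf{e}_j+\mathbf{e}_{n+1}$ for ${\bf x}_et$ with $e=\{i,j\}$. By the standard facet correspondence for normal monoid rings \cite{BG2009}, the height one monomial primes of $R_G$ are exactly the facets of $C$: a primitive integral functional $a=(c_1,\dots,c_n,d)$ that is nonnegative on all generators and whose vanishing locus among the generators spans a hyperplane determines the prime generated by the generators on which $a$ is strictly positive. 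Hence it suffices to exhibit one facet whose prime is none of the $P_C$, $Q_i$, $P_0$.

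I would build such a functional as follows. Write $N(w)$ for the neighbours of $w$ and $Z=V(G)\setminus(\{w\}\cup N(w))$. Set $d=2$ and $c_w=-2$, $c_v=0$ for $v\in N(w)$; on $Z$ put $c_v=-1$ on every non-bipartite connected component, and on each bipartite component $W$ fix a bipartition $W=X_W\sqcup Y_W$ (isolated vertices placed in $X_W$) and set $c_v=-2$ on $X_W$, $c_v=0$ on $Y_W$. Because $w$ has no neighbour in $V_0$, the cycle satisfies $V_0\subseteq Z$ and lies in a non-bipartite component of $Z$, so $c_v=-1$ on $V_0$ and every cycle edge satisfies $c_i+c_j+d=0$; one checks readily that all the inequalities $c_i+c_j+d\ge0$ (edges) and $c_v+d\ge0$ (vertices) hold, the weight $c_w=-2$ causing no violation since the neighbours of $w$ all carry weight $0$. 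The associated prime $P_a$ contains $t$ and every $x_vt$ with $c_v>-2$, but omits $x_wt$ and the monomials of the cycle edges, even though $V_0\subseteq\{v:c_v>-2\}$. Consequently $P_a\ne Q_i$ (it contains $t$), $P_a\ne P_0$ (it omits $x_wt$), and $P_a$ is not of the form $P_C$, since such a prime would contain ${\bf x}_et$ for every edge $e$ inside $C=\{i:x_it\in P_a\}$, whereas $P_a$ omits the cycle edges.

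The hard part will be to verify that $a$ is genuinely a facet, i.e.\ that the generators on which $a$ vanishes span a hyperplane of $\RR^{n+1}$. These vanishing generators are the tight vertices $x_vt$ (for the excluded $v$) together with the tight edges, and by construction they form a spanning subgraph of $G$ each of whose connected components is either non-bipartite or bipartite with at least one excluded vertex. I would invoke the classical fact that the incidence vectors $\mathbf{e}_i+\mathbf{e}_j$ of a connected graph span $\RR^{V}$ when the graph is non-bipartite and a codimension one subspace when it is bipartite, the missing direction being restored by any single tight vertex lying on one side of the bipartition; combined with a short argument that the per-component spans are independent (none meets the last-coordinate axis), this makes the total span $n$-dimensional. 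Carrying out this spanning-and-independence check uniformly for an arbitrary $G$ is the crux of the proof.
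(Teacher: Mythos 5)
Your proposal is correct, but it takes a genuinely different --- and strictly harder --- route than the paper in its second half. The first half is identical: under (a) the ideal $P_0$ lies in $\mathcal{P}_G$, so Corollary \ref{Cor:GPGNonBipartite} gives non-bipartiteness. For the cycle condition, the paper also builds a supporting hyperplane, but a much cruder one, namely $f(x)=-\sum_{i\notin N_G(n)}x_i-2x_n+2x_{n+1}$ (essentially your weights with every component of $Z$ treated as if it were non-bipartite). The resulting monomial prime $Q$ is typically \emph{not} of height one, and the paper never verifies any facet condition; instead it takes a minimal monomial prime $P$ of $(t)$ with $P\subseteq Q$, which has height one automatically (Krull's principal ideal theorem in the normal domain $R_G$) and is monomial by \cite[Corollary 4.33]{BG2009}. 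The exclusion argument is then the same as yours and, crucially, is stable under shrinking: $P\ne P_0$ because $x_nt\notin Q$, $P\ne P_C$ because the cycle edges are not in $Q$, and $P\ne Q_i$ because $t\in P$. Your ``crux'' --- the bipartition-sensitive weights on $Z$ and the check that the tight generators span a hyperplane --- is exactly what this trick renders unnecessary: all of your non-membership arguments use only that your prime omits $x_wt$ and the cycle edges, properties inherited by any sub-prime. That said, your sketch of the crux is sound and does go through: for a connected non-bipartite component $M$ of $G[Z]$ the tight edge vectors span a space of dimension $|V(M)|$ meeting the last-coordinate axis trivially; for a bipartite component the missing direction is restored by the tight vertices on the $X_W$-side; and the per-component sums are direct because distinct components have disjoint vertex supports, so any relation would be forced onto the last-coordinate axis. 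Hence your functional does cut out a facet and your prime genuinely witnesses the failure of (a). What your approach buys is an explicit height one prime outside the list in (a); what the paper's buys is brevity, trading your dimension count for a soft existence argument.
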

	
	To prove the theorem, we recall some basic facts about semigroups and semigroup algebras. We denote by $\Delta_G$ the simplicial complex on $[n]$ whose facets are the edges of the graph $G$. As customary, we identify a monomial $x_1^{a_1}\cdots x_n^{a_n}t^b\in R_G$ with its exponent vector $(a_1,\dots,a_n,b)\in\ZZ^{n+1}$. Thus, the monomial $K$-basis of $R_G$ corresponds to the affine semigroup $S\subset\ZZ^{n+1}$ generated by the lattice points $p_F=\sum_{i\in F}e_i+e_{n+1}\in\ZZ^{n+1}$, where $F\in\Delta_G$. Here, $e_1,\dots,e_{n+1}$ is the standard basis of $\ZZ^{n+1}$.
	
	Following \cite{BH}, we denote by $\ZZ S$ the smallest subgroup of $\ZZ^{n+1}$ containing $S$ and by $\RR_+S\subset\RR^{n+1}$ the smallest cone containing $S$. In our case $\ZZ S=\ZZ^{n+1}$. Furthermore, $S=\ZZ^{n+1}\cap\RR_+S$ if $R_G$ is normal \cite[Proposition 6.1.2]{BH}.
	
	A \textit{hyperplane} $H$, defined as the set of solutions of the linear equation $f(x)=a_1x_1+a_2x_2+\dots+a_{n+1}x_{n+1}=0$, is called a \textit{supporting hyperplane} of the cone $\RR_+S$ if $H\cap\RR_+S\ne\emptyset$ and $f({\bf c})\ge0$ for all ${\bf c}\in\RR_+S$. Since any element ${\bf c}\in\RR_+S$ is a linear combination with non-negative coefficients of the lattice points $p_F$, with $F\in\Delta_G$, it follows that $H$ is a supporting hyperplane of $\RR_+S$, if and only if $f(p_F)\ge0$ for all $F\in\Delta_G$.
	
	A subset $\mathcal{F}$ of $\RR_+S$ is called a \textit{face} of $\RR_+S$, if there exists a supporting hyperplane $H$ of $\RR_+S$ such that $\mathcal{F}=H\cap\RR_+S$. We may assume that the coefficients $a_i$ appearing in $f(x)=0$ are integers and $\gcd(a_1,\dots,a_{n+1})=1$. If $H$ is the supporting hyperplane of a facet $\mathcal{F}$, the normalized form defining $H$ is unique and we called it the \textit{support form} of $\mathcal{F}$.
		
	Let $P\subset R_G$ be a monomial ideal. By \cite[Propositions 2.36 and 4.33]{BG2009} we have that $P$ is a monomial prime ideal if and only if there exists a face $\mathcal{F}$ of the cone $\RR_+S$ such that $P=({\bf x}_Ft:F\in\Delta_G\setminus\mathcal{F})$. Equivalently, $P$ is a monomial prime ideal, if and only if there exists a supporting hyperplane $H$ of $\RR_+S$ such that
	$$
	P\ =\ ({\bf x}_Ft:F\in\Delta_G\ \text{and}\ f(p_F)>0).
	$$
	
	\begin{proof}[Proof of Theorem \ref{Thm:PGOddCycle}]
		Assume (a) holds. Then, by Corollary \ref{Cor:GPGNonBipartite}, $G$ is non-bipartite. Hence, $G$ contains at least one induced odd cycle. Suppose for a contradiction that (b) is not satisfied. Then $G$ contains an induced odd cycle $G_0$ and a vertex $v_0\in V(G)\setminus V(G_0)$ that is not adjacent to any vertex $v\in V(G_0)$. After a suitable relabeling, we may assume that $v_0=n$.
		
		We claim that the monomial ideal
		$$
		Q\ =\ (t,x_1t,\dots,x_{n-1}t,\{x_ix_jt\}_{i\in N_G(n),j\in N_G(i)\setminus\{n\}})
		$$
		is a prime ideal of $R_G$. Here for a vertex $k$ of $G$, $N_G(k)$ denotes the set of vertices $i$ such that $\{i,k\}$ is an edge of $G$.
		
		Let $H$ be the hyperplane defined by the equation $f(x)=0$ where
		$$
		f(x)=-\sum_{i\notin N_G(n)}x_i-2x_n+2x_{n+1}.
		$$
		
		Let $F\in\Delta_G$. We claim that $f(p_F)>0$ if ${\bf x}_Ft\in Q$, and $f(p_F)=0$ if ${\bf x}_Ft\notin Q$. This shows that $H$ is a supporting hyperplane of $\RR_+S$ where $S$ is the affine semigroup generated by the lattice points $p_F$, $F\in\Delta_G$, and that $Q=({\bf x}_Ft:F\in\Delta_G,f(p_F)>0)$ is a monomial prime ideal.
		
		If $F=\emptyset$, then $f(p_\emptyset)=2$. Suppose $F=\{i\}$. If $i<n$, then
		$$
		f(p_{\{i\}})=\begin{cases}
			2&\textit{if}\ i\in N_G(n),\\
			1&\textit{if}\ i\notin N_G(n).
		\end{cases}
		$$
		
		If $F=\{n\}$, then $f(p_{\{n\}})=0$.
		
		Finally, assume $F=\{i,j\}\in E(G)$. If $i=n$, then $j\in N_G(n)$ and $f(p_{\{i,j\}})=0$ in this case. Suppose both $i$ and $j$ are different from $n$. Then,
		$$
		f(p_{\{i,j\}})=\begin{cases}
			2&\textit{if}\ \ i,j\in N_G(n),\\
			1&\textit{if}\ \ i\in N_G(n), j\notin N_G(n)\ \ \textit{or}\ \ i\notin N_G(n), j\in N_G(n),\\
			0&\textit{if}\ \ i,j\notin N_G(n).
		\end{cases}
		$$
		
		Therefore, $Q$ is a prime ideal of $R_G$ containing $t$. Thus, there exists a minimal monomial prime ideal $P$ such that $(t)\subset P\subseteq Q$. Hence, $P$ is generated by a subset of the generators of $Q$ and contains $t$. We claim that $P$ is different from $P_C$, for all $C\in\mathcal{C}(G)$, and different from $(t,x_1t,\dots,x_nt)$. This contradicts (a) and shows that (b) holds.
		
		It is clear that $P$ is different from $(t,x_1t,\dots,x_nt)$ because $x_nt\notin P$. Now, let $C\in\mathcal{C}(G)$, then $D=C\cap V(G_0)$ is a vertex cover of $G_0$. Since $G_0$ is an odd cycle, $D$ must contain two adjacent vertices $i,j\in V(G_0)$. Thus, $x_ix_jt\in P_C$. Since $n$ is not adjacent to any vertex $v\in V(G_0)$, we have that $i,j\notin N_G(n)$. Hence $x_ix_jt\notin Q$ and $x_ix_jt\notin P$, also. Thus, $P$ is different from $P_C$, for all $C\in\mathcal{C}(G)$, as wanted.
	\end{proof}
	
	Due to experimental evidence, we expect that statements (a) and (b) of Theorem \ref{Thm:PGOddCycle} are indeed equivalent.
	
	Recall that a graph $G$ is called \textit{unicyclic} if $G$ is connected and contains exactly one induced cycle. Note that a unicyclic graph $G$ satisfies the odd cycle condition, and so $R_G$ is a normal domain. Next, we characterize those unicyclic graphs such that equality holds in (\ref{eq:inclusionPG}). It turns out that for this class of graphs, the statements (a) and (b) of Theorem \ref{Thm:PGOddCycle} are equivalent.
	
	For this aim, we introduce the concept of \textit{whiskered cycles}. Hereafter, for convenience and with abuse of notation, we identify the vertices of $G$ with the variables of $R_G$. Let $k\ge3$ and $a_1,a_2,\dots,a_k\ge0$ be non-negative integers. The \textit{whiskered cycle of type $(a_1,\dots,a_k)$} is the graph $G=C(a_1,\dots,a_k)$ on vertex set
	$$
	V(G)=\{x_1,\dots,x_k\}\cup\bigcup_{i=1}^k\bigcup_{j=1}^{a_i}\{x_{i,j}\},
	$$
	and with edge set
	$$
	E(G)=\{\{x_1,x_2\},\{x_2,x_3\},\dots,\{x_{k-1},x_k\},\{x_k,x_1\}\}\cup\bigcup_{i=1}^k\bigcup_{j=1}^{a_i}\{\{x_i,x_{i,j}\}\}.
	$$
	
	If $k$ is even (odd), $G$ is called a whiskered even (odd) cycle. The vertices $x_{i,j}$ are called the \textit{whiskers} of $x_i$.\smallskip
	
	For example, the whiskered cycle $C(3,2,1,0,1)$ is depicted below\bigskip
	
	\begin{center}
		\begin{tikzpicture}
			\draw[-] (0.8,0.8) -- (1.1,0) -- (1.9,0) -- (2.2,0.8) -- (1.5,1.33) -- (0.8,0.8);
			\draw[-] (1.5,1.33) -- (1.5,1.85);
			\draw[-] (1.5,1.33) -- (1.2,1.75);
			\draw[-] (1.5,1.33) -- (1.8,1.75);
			\draw[-] (2.2,0.8) -- (2.6,1);
			\draw[-] (2.2,0.8) -- (2.6,0.6);
			\draw[-] (0.8,0.8) -- (0.4,0.8);
			\draw[-] (1.9,0) -- (2.2,-0.3);
			\filldraw[black, fill=white] (0.8,0.8) circle (2pt);
			\filldraw[black, fill=white] (1.1,0) circle (2pt);
			\filldraw[black, fill=white] (1.9,0) circle (2pt);
			\filldraw[black, fill=white] (2.2,0.8) circle (2pt);
			\filldraw[black, fill=white] (1.5,1.33) circle (2pt);
			\filldraw[black, fill=white] (1.5,1.85) circle (2pt);
			\filldraw[black, fill=white] (1.2,1.75) circle (2pt);
			\filldraw[black, fill=white] (1.8,1.75) circle (2pt);
			\filldraw[black, fill=white] (2.6,1) circle (2pt);
			\filldraw[black, fill=white] (2.6,0.6) circle (2pt);
			\filldraw[black, fill=white] (0.4,0.8) circle (2pt);
			\filldraw[black, fill=white] (2.2,-0.3) circle (2pt);
			\filldraw[black, fill=white] (1.5,-0.2) node[below]{};
		\end{tikzpicture}
	\end{center}\medskip

	The next elementary lemma is required.
	\begin{Lemma}\label{Lemma:coversWhiskCycle}
		Let $G=C(a_1,\dots,a_k)$ be a whiskered cycle and $C\in\mathcal{C}(G)$ a minimal vertex cover. If $a_i>0$ for some $i$, then either $x_i\in C$ or $x_{i,j}\in C$ for all $j=1,\dots,a_i$.
	\end{Lemma}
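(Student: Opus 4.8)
The plan is to reduce everything to the defining property of a vertex cover applied to the whisker edges, proceeding by a simple dichotomy on whether the cycle vertex $x_i$ belongs to $C$. Recall from the definition of $G=C(a_1,\dots,a_k)$ that each whisker $x_{i,j}$ is a leaf: its unique incident edge is $\{x_i,x_{i,j}\}\in E(G)$, joining it to $x_i$. Since $C$ is a vertex cover, for every $j\in\{1,\dots,a_i\}$ it must contain at least one endpoint of $\{x_i,x_{i,j}\}$, that is, $x_i\in C$ or $x_{i,j}\in C$.

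First I would dispose of the case $x_i\in C$: here the first alternative in the conclusion already holds and there is nothing to prove. Otherwise $x_i\notin C$. Then for each fixed $j$ the edge $\{x_i,x_{i,j}\}$ still needs to be covered, and since its endpoint $x_i$ is unavailable, the only remaining possibility is $x_{i,j}\in C$. As $j\in\{1,\dots,a_i\}$ was arbitrary, this yields $x_{i,j}\in C$ for all $j$, which is exactly the second alternative, and the argument concludes.

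I do not expect any genuine obstacle: the statement is a direct consequence of the leaf structure of the whiskers, and the only ``choice'' in covering $\{x_i,x_{i,j}\}$ disappears once $x_i$ is excluded from $C$. In particular, minimality of $C$ is not needed for the stated conclusion; it would only become relevant if one wished to strengthen the dichotomy to an exclusive one (namely, that $x_i\in C$ forces $x_{i,j}\notin C$ for all $j$, since each such whisker would then cover only the already-covered edge $\{x_i,x_{i,j}\}$), but this sharper form is not claimed here.
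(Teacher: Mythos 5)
Your proof is correct and takes essentially the same route as the paper's: both argue by the dichotomy on whether $x_i\in C$ and use the fact that each whisker edge $\{x_i,x_{i,j}\}$ must be covered. Your closing remark is also accurate — the paper's proof does invoke minimality of $C$, but only to establish the (unstated) exclusive half of the dichotomy, namely that $x_i\in C$ forces $x_{i,j}\notin C$ for all $j$, which is the sharper form implicitly relied upon later in the proof of Theorem \ref{Thm:unicyclic}.
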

	\begin{proof}
		Let $a_i>0$. Then $x_i$ has at least one whisker. Since $C$ is a minimal vertex cover of $G$, we must have $C\cap\{x_i,x_{i,j}\}\ne\emptyset$ for all $j=1,\dots,a_i$. Suppose $x_i\in C$, then $x_{i,j}\notin C$ for all $j=1,\dots,a_i$, by the minimality of $C$. Otherwise, if $x_i\notin C$, then $x_{i,j}\in C$ for all $j=1,\dots,a_i$, because $C$ is a vertex cover of $G$.
	\end{proof}
	
	Hereafter, we regard the set $[0]$ as the empty set.
	
	Let $G=C(a_1,\dots,a_k)$ be a whiskered cycle. Let $j\ge3$ be a positive integer and let $x_{i},x_{i+1},\dots,x_{i+j}$ be $j+1$ adjacent vertices of the unique induced cycle of $G$. Here, if $i+p$ exceeds $k$, for some $1\le p\le j$, we take the remainder modulo $k$. Then, the \textit{whisker interval} $W(i,i+j)$ is defined as
	\begin{align*}
		W(i,i+j)&=\{x_{i},x_{i+1},\dots,x_{i+j}\}\cup\bigcup_{\ell=i+1}^{i+j-1}\bigcup_{h=1}^{a_\ell}\{x_{\ell,h}\}\\
		&=\{x_{i},x_{i+1},\dots,x_{i+j}\}\cup\{\text{whiskers of}\ x_{i+1},\dots,x_{i+j-1}\}.
	\end{align*}
    We say that $W(i,i+j)$ is \textit{proper} if $\{x_1,x_2,\dots,x_k\}\not\subseteq W(i,i+j)$.
	
	Note that, if $i_1\le i_2\le i_1+j_1-1$ and $i_1+j_1\le i_2+j_2$, then
	$$
	W(i_1,i_1+j_1)\cup W(i_2,i_2+j_2)=W(i_1,i_2+j_2).
	$$
	
	We say that $W(i_1,i_1+j_1)$ and $W(i_2,i_2+j_2)$ are \textit{whisker-disjoint}, if $$|W(i_1,i_1+j_1)\cap W(i_2,i_2+j_2)|\le1,$$ that is $W(i_1,i_1+j_1)$ and $W(i_2,i_2+j_2)$ intersect at most in one vertex.
	
	It is clear that for any collection of proper whisker intervals $W_1,\dots,W_r$ there exist whisker-disjoint proper whisker intervals $V_1,\dots,V_t$ such that $W_1\cup\dots\cup W_r=V_1\cup\dots\cup V_t$.
	
	Now, we are in the position to state and prove the announced classification.
	\begin{Theorem}\label{Thm:unicyclic}
		Let $G$ be a unicyclic graph on $n$ vertices. Then, the following conditions are equivalent.
		\begin{enumerate}
			\item[\textup{(a)}] The set
			$$
			\mathcal{P}_G=\{P_C:C\in\mathcal{C}(G)\}\cup\{(t,x_1t,x_2t,\dots,x_nt)\}\cup\{Q_1,\dots,Q_n\}
			$$
			is the set of height one monomial prime ideals of $R_G$.
			\item[\textup{(b)}] $G$ is a whiskered odd cycle.
		\end{enumerate}
	\end{Theorem}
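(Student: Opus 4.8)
The plan is to prove the nontrivial implication (b)$\Rightarrow$(a) by classifying the facets of $\RR_+S$ through their support forms, and to deduce (a)$\Rightarrow$(b) from Theorem \ref{Thm:PGOddCycle}. It is convenient to write a support form in the coordinates $(s_v)_{v\in V(G)},\gamma$, where $s_v=f(p_{\{v\}})=\alpha_v+\gamma\ge0$ and $\gamma=f(p_\emptyset)\ge0$; then $f(p_F)=\sum_{v\in F}s_v-(|F|-1)\gamma$, so the dual cone $\RR_+S^\vee$ is cut out by $s_v\ge0$, $\gamma\ge0$ and $s_u+s_v\ge\gamma$ for every edge $\{u,v\}$. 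By the correspondence recalled just before the proof of Theorem \ref{Thm:PGOddCycle}, the height one monomial primes of $R_G$ are exactly the primitive extreme rays of $\RR_+S^\vee$: those with $\gamma=0$ are the ideals $Q_i$ by fact (v), and those with $\gamma>0$ are the minimal primes of $(t)$. Since every prime listed in (a) lies in $\mathcal P_G$ by \eqref{eq:inclusionPG}, it suffices to prove that there are no further extreme rays.

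For (a)$\Rightarrow$(b), note that a unicyclic graph satisfies the odd cycle condition, so $R_G$ is a normal domain and Theorem \ref{Thm:PGOddCycle} applies. If (a) holds then $P_0\in\mathcal P_G$, so $G$ is non-bipartite by Corollary \ref{Cor:GPGNonBipartite} and its unique cycle $G_0$ is odd; moreover Theorem \ref{Thm:PGOddCycle} forces every vertex outside $G_0$ to be adjacent to $G_0$. In a unicyclic graph this rules out any vertex at distance $\ge2$ from $G_0$, so every off-cycle vertex is a whisker and $G$ is a whiskered odd cycle.

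The substance is (b)$\Rightarrow$(a): I must show that every primitive extreme ray with $\gamma>0$ is either the form $f_C$ attached to a minimal vertex cover $C$ (namely $\gamma=1$ and $s_v=1$ for $v\in C$, $s_v=0$ otherwise, which yields $P_C$) or the form $f_0$ given by $\gamma=2$ and $s_v\equiv1$ (which yields $P_0$). First I record that extremality pins every coordinate: if $s_v>\gamma$, or if $s_v\ge2$ while $\gamma=1$, then $v$ lies in no tight constraint, so $s_v$ is free and the ray is not extreme. Hence $0\le s_v\le\gamma$ always, and when $\gamma=1$ every $s_v\in\{0,1\}$. In the case $\gamma=1$ the set $D=\{v:s_v=1\}$ is then a vertex cover with $P=P_D$, and the ray is extreme precisely when $D$ is minimal; Lemma \ref{Lemma:coversWhiskCycle} describes how the whiskers sit inside such a $D$. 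This recovers exactly the family $\{P_C:C\in\mathcal C(G)\}$.

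For $\gamma\ge2$ I claim the odd cycle must be \emph{fully tight}. A whisker $x_{i,j}$ is pinned only by $s_{i,j}=0$ or by $s_i+s_{i,j}=\gamma$, so it can pin the cycle value $s_i$ only by forcing $s_i\in\{0,\gamma\}$; therefore any cycle vertex with $0<s_i<\gamma$ must be pinned by a tight cycle edge, and the alternating values $s_i,\gamma-s_i,\dots$ propagate along its maximal tight arc. If that arc were proper (a genuine whisker interval in the sense introduced above), all of its values would be strictly between $0$ and $\gamma$, and I could perturb them by $\pm\epsilon$ in the alternating pattern while co-adjusting the attached whiskers, keeping the boundary slack edges and all inequalities valid; this produces a two-dimensional face and contradicts extremality. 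Hence the tight arc is the entire cycle, and because the cycle is \emph{odd} the relation $s_{i+1}=\gamma-s_i$ closes up only when $s_i\equiv\gamma/2$. Integrality forces $\gamma$ even, the whiskers again take the value $\gamma/2$, and primitivity forces $\gamma=2$, so $f=f_0$. The only alternative, that no cycle value is interior, makes all $s_v\in\{0,\gamma\}$, whence primitivity returns us to $\gamma=1$. The main obstacle is exactly this $\gamma\ge2$ step: making the perturbation/extremality argument watertight in the presence of whiskers, where the whisker-interval language and the reduction to whisker-disjoint intervals organize the bookkeeping and the parity of the cycle is what excludes any minimal prime beyond $P_0$.
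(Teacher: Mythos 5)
Your proposal is correct, and on the essential implication (b)$\Rightarrow$(a) it takes a genuinely different route from the paper. The paper argues ring-theoretically: it fixes a monomial prime $P\supseteq(t)$ containing no $P_C$, sets $D=\{v:x_vt\in P\}$, and runs a recursive construction of auxiliary vertex covers (organized by the whisker-interval bookkeeping and Lemma \ref{Lemma:coversWhiskCycle}) to force $D=V(G)$, hence $P_0\subseteq P$ and, by minimality, $P=P_0$. You instead classify the primitive extreme rays of the dual cone, whose inequalities $s_v\ge 0$, $\gamma\ge 0$, $s_u+s_v\ge\gamma$ you identify correctly; the slice $\gamma=1$ reproduces exactly the minimal vertex covers, and for $\gamma\ge 2$ your tightness/perturbation analysis is complete: an interior cycle value cannot be pinned by whisker edges alone, so it lies on a tight cycle arc; a proper tight arc admits the alternating $\pm\epsilon$ perturbation with whiskers co-adjusted (this is valid --- tight constraints off the arc are untouched since the direction vanishes there, and whiskers on the arc with tight edges have value $\gamma-s_\ell>0$, so their nonnegativity constraints stay slack); and a fully tight \emph{odd} cycle forces $s_v\equiv\gamma/2$, whence primitivity gives $\gamma=2$ and the form $f_0$. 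Together with \eqref{eq:inclusionPG} this yields (a). What each approach buys: the paper's proof stays inside the monomial-ideal framework it has already built, at the cost of an intricate recursion; your polyhedral proof localizes exactly where oddness enters (the relation $s_{i+1}=\gamma-s_i$ closes up around the cycle only at $\gamma/2$), makes visible why even whiskered cycles fail, and produces as a by-product the support forms $f_C$ and $f_0$ that Section \ref{sec4-FHS23} uses for the Gorenstein criterion. Two small caveats you should make explicit: the bound $0\le s_v\le\gamma$ is asserted only for rays with $\gamma>0$ (it fails for the rays of the $Q_i$, where $\gamma=0$), which your context does make clear; and the identification of height one monomial primes with facets of $\RR_+S$, equivalently with extreme rays of the dual cone, uses the normality of $R_G$, so it must be invoked in (b)$\Rightarrow$(a) as well --- immediate here, since a whiskered odd cycle is unicyclic and Theorem \ref{Thm:RGNormal} applies.
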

	\begin{proof}
		Since $G$ is unicyclic, it follows from Theorem \ref{Thm:RGNormal} that $R_G$ is normal.\medskip\\
		The implication (a)$\Rightarrow$(b) follows immediately from Theorem \ref{Thm:PGOddCycle}.\medskip\\
		(b)$\Rightarrow$(a). Suppose $G$ is a whiskered odd cycle. Then $G=C(a_1,\dots,a_k)$ for some odd $k\ge3$ and some non-negative integers $a_1,a_2,\dots,a_k$. Let $G_0$ be the induced graph of $G$ on vertex set $x_1,\dots,x_k$. Then $G_0$ is an odd cycle
		
		Let $P\subset R_G$ be a monomial prime ideal containing $t$ and such that $P\not\supseteq P_C$ for all vertex covers $C$ of $G$. Set $P_0=(t,x_it,x_{i,j}t:i\in[k],j\in[a_i])$. We claim that
		$$
		P_0\subseteq P.
		$$
	
    	The set $D=\{x_i:x_it\in P\}\cup\{x_{i,j}:x_{i,j}t\in P\}$ is a vertex cover of $G$. We are going to prove that $D=V(G)$. From this, it will follow that $P_0\subseteq P$.
    	
    	Since $D$ is a vertex cover, there exists a minimal vertex cover $C$ contained in $D$. By Lemma \ref{Lemma:coversWhiskCycle}, the only adjacent vertices of $C$ can be the vertices of the cycle $G_0$. In particular, $C_0=C\cap V(G_0)$ is a (possibly non minimal) vertex cover of $G_0$.
    	
    	Since $G_0$ is an odd cycle, $C_0$ must contain at least a pair of adjacent vertices $x_i,x_j$ of $G_0$. Suppose that for all such adjacent vertices $x_i,x_j\in C_0$ we have $x_ix_jt\in P$. Then, $P_C$ would be contained in $P$, because by Lemma \ref{Lemma:coversWhiskCycle} the only adjacent vertices of $C$ can be the $x_i$. But this is against our assumption. Therefore, there exist two adjacent vertices $x_i,x_j\in C$ for which $x_ix_jt\notin P$. Up to relabeling, we may assume $i=2$ and $j=3$. We claim that $x_1$ and all the whiskers of $x_2$ and $x_3$ belong to $D$.
    	
    	Suppose that $x_1\notin D$. Then $x_1 t\notin P$. Since also $x_2x_3t\notin P$, the product $(x_1t)(x_2x_3t)$ should not be in $P$. However, $(x_1t)(x_2x_3t)=(x_1x_2t)(x_3t)\in P$, which is a contradiction. Therefore, $x_1\in D$. Similarly, suppose that $x_{2,j}\notin D$ for some $j$. Then $x_{2,j}t\notin P$. Since also $x_2x_3t\notin P$, the product $(x_{2,j}t)(x_2x_3t)$ should not be in $P$. However, $(x_{2,j}t)(x_2x_3t)=(x_2x_{2,j}t)(x_3t)\in P$, a contradiction. Therefore, $x_{2,j}\in D$. Similarly $x_{3,\ell}\in D$ and our claim follows. We distinguish two cases now.\smallskip\\
	    \textsc{Case 1.} Suppose $k=3$. By the previous discussion, $x_1,x_2,x_3,x_{2,j},x_{3,\ell}\in D$, for all $j\in[a_2]$ and $\ell\in[a_3]$. It remains to prove that the whiskers of $x_1$ belong to $D$. Indeed, the vertex cover $C_1=\{x_1,x_2,\text{whiskers of}\ x_3\}$ is contained in $D$. Since $P_{C_1}\not\subseteq P$, we must have $x_1x_2t\notin P$. By the argument used before, we obtain that all whiskers of $x_1$ belong to $D$. Hence, $D=V(G)$ and so $P$ contains $P_0$, as wanted.\smallskip\\
    	\textsc{Case 2.} Suppose $k>3$. By the argument above, we have also that $x_4\in D$. Hence,
    	$$
    	W(1,4)=\{x_1,x_2,x_3,x_4\}\cup\bigcup_{i=2,3}\bigcup_{h\in[a_i]}\{x_{i,h}\}\subseteq D.
    	$$
    	
    	Now, we recursively determine vertex covers $C_i\subset D$ in order to obtain each time new whisker intervals that belong to $D$, and in the end to have that $D=V(G)$.
    	
    	Let
    	$$
    	C_1=\big(C\setminus\{x_2,\text{whiskers of}\  x_1\ \text{and}\ x_4\}\big)\cup\{x_1,x_4\}\cup\{\text{whiskers of}\ x_2\}.
    	$$
    	It is clear that $C_1$ is a cover of $G$. Since, by assumption, $P$ does not contain $P_{C_1}$, it follows that $P$ does not contain $x_ix_jt$, for some adjacent vertices $x_i,x_j\in C_1$. Since $x_2\notin C_1$, it follows that $x_ix_jt$ is different from $x_2x_3t$. Thus, $j=i-1$ and $i\in\{4,\dots,k\}$ or $j=1$ and $i=k$. Let $p$ and $q$ be the adjacent vertices of $i-1$ and $i$, different from $i$ and $i-1$. Then $p=i-2$ and $q=i+1$. Here we take the remainder modulo $k$, if these numbers exceed $k$. Arguing as before,
    	$$
    	W(i-2,i+1)\subseteq D.
    	$$
    	
    	After repeating this argument as many time as possible, if $D=V(G)$ then we are finished. Otherwise, at a given step of this procedure, we have that there exist integers $i_1,j_1$, $\dots$, $i_r,j_r$, with $j_1,\dots,j_r\ge3$ such that
    	\begin{equation}\label{eq:eqWhisk}
    		W(i_1,i_1+j_1)\cup W(i_2,i_2+j_2)\cup\dots\cup W(i_r,i_r+j_r)\subseteq D,
    	\end{equation}
    	and these whisker intervals are proper and mutually whisker-disjoint.
    	
    	Now, starting from the vertex cover $C$, we construct another vertex cover $C'$ of $G$ contained in $D$, having the following properties:
    	\begin{enumerate}
    		\item[(i)] The only adjacent vertices of $C'$ belong to the cycle $G_0$.
    		\item[(ii)] if $x_i,x_j\in C'$ are adjacent vertices that belong to a whisker interval above, say $W(i_a,i_a+j_a)$, then either $\{i,j\}=\{i_a,i_a+1\}$ or $\{i,j\}=\{i_a+j_a-1,i_a+j_a\}$.
    	\end{enumerate}
    
    	The vertex cover $C'$ having the properties (i) and (ii) is constructed as follows. Let $W(i,i+j)$ be a whisker interval in (\ref{eq:eqWhisk}). We distinguish two cases: $j$ even, say $j=2\ell$, and $j$ odd, say $j=2\ell+1$.
    	
    	If $j=2\ell$, we add to $C$ the vertices
    	$$
    	x_i,x_{i+1},x_{i+3},x_{i+5},\dots,x_{i+2\ell-3},x_{i+2\ell-1},x_{i+2\ell}
    	$$
    	and remove all the corresponding whiskers, and moreover, we remove the vertices
    	$$
    	x_{i+2},x_{i+4},\dots,x_{i+2\ell-2}
    	$$
    	and add all the corresponding whiskers. We call $C'$ the resulting set.
    	
    	Whereas, if $j=2\ell+1$, we add to $C$ the vertices
    	$$
    	x_i,x_{i+1},x_{i+3},x_{i+5},\dots,x_{i+2\ell-3},x_{i+2\ell-1},x_{i+2\ell+1}
    	$$
    	and remove all the corresponding whiskers, and moreover, we remove the vertices
    	$$
    	x_{i+2},x_{i+4},\dots,x_{i+2\ell-2},x_{i+2\ell}
    	$$
    	and add all the corresponding whiskers. We call $C'$ the resulting set.
    	
    	When we have more than one whisker interval, we repeat the operations above for all whisker intervals, and call $C'$ the set obtained in this way. Such a set is well defined, because our whisker intervals are proper and mutually whisker-disjoint. It is clear that $C'$ is a vertex cover of $G$ satisfying the properties (i) and (ii).

    	Now, we argue as follows. Since $P$ does not contain $P_{C'}$ by assumption, and since $G_0$ is an odd cycle, by (i) there exists two adjacent vertices $x_i,x_{i+1}\in C'$ such that $x_ix_{i+1}t\notin P$. If $\{i,i+1\}\subseteq W(i_a,i_a+j_a)$, by (ii) either $\{i,i+1\}=\{i_a,i_a+1\}$ or $\{i,i+1\}=\{i_a+j_a-1,i_a+j_a\}$. Say, $\{i,i+1\}=\{i_a,i_a+1\}$, then arguing as before, we have that
    	$$
    	W(i-1,i+2)\subseteq D.
    	$$
    	Otherwise, if $\{i,i+1\}$ is not contained in any of the whisker intervals constructed up to this point, then $W(i-1,i+2)\subseteq D$. In both cases, we can enlarge the set of the whisker intervals contained in $D$. Therefore, after a finite number of steps, we obtain either $D=V(G)$ or a non-proper whisker interval is contained in $D$. In this latter case, up to relabeling we may assume that $W(1,k)\subseteq D$. So, we only need to argue that the whiskers of $x_1$ and $x_k$ are in $D$.
    	
    	Since $W(1,k)\subseteq D$, the vertex cover
    	$$
    	C_2=\{x_1,x_k\}\cup\{x_3,x_5,\dots,x_{k-2}\}\cup\{\text{whiskers of}\ x_{2},x_4,\dots,x_{k-1}\}
    	$$
    	is contained in $D$. Since $P$ does not contain $P_{C_2}$, we must have that $x_1x_kt\notin P$. By the similar argument used before, $W(k-1,2)\subseteq D$. Therefore $D=V(G)$.
    	
    	Since $D=V(G)$, it follows that $P_0\subseteq P$. Therefore, any minimal monomial prime ideal $P$ of $(t)$ different from $P_C$ for all $C\in\mathcal{C}(G)$, must contain $P_0$. Thus $P=P_0$ by Corollary \ref{Cor:GPGNonBipartite}. Hence, the set of height one monomial prime ideals containing $t$ is given by $\{P_C:C\in\mathcal{C}(G)\}\cup\{P_0\}$ and (a) follows.
    \end{proof}
	
	\section{The Gorenstein property}\label{sec4-FHS23}
	
	In this last section, we discuss the Gorenstein property for the toric ring of a simplicial complex $\Delta$. Summarizing some of the results of \cite{HHMQ23a}, we have the following
	\begin{Lemma}\label{Lemma:Georgio}
		Assume that $R_\Delta$ is normal and let $P_1,\dots,P_r$ be the height one monomial prime ideals containing $t$ and $Q_1,\dots,Q_n$ the height one monomial prime ideals not containing $t$. Furthermore, let
		$$
		f_j=\sum_{i=1}^{n+1}c_{i,j}x_i
		$$
		be the support forms associated to $P_j$, $j=1,\dots,r$. Then,
		\begin{enumerate}
			\item[\textup{(a)}] $\textup{Cl}(R_\Delta)$ is generated by $[P_1],\dots,[P_r]$ with unique relation $\sum_{i=1}^rc_{i,n+1}[P_i]=0$.
			\item[\textup{(b)}] For all $j=1,\dots,n$, $[Q_j]=-\sum_{i=1}^rc_{i,j}[P_i]$.
			\item[\textup{(c)}] $[\omega_{R_\Delta}]=\sum_{i=1}^r[P_i]+\sum_{j=1}^n[Q_j]$.
		\end{enumerate}
	\end{Lemma}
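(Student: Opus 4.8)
The plan is to treat $R_\Delta$ as the normal affine semigroup ring $K[S]$, where $S\subset\ZZ^{n+1}$ is generated by the lattice points $p_F=\sum_{i\in F}e_i+e_{n+1}$ (for $F\in\Delta$) and $\ZZ S=\ZZ^{n+1}$, and to read off the divisor class group from the standard description via principal divisors of Laurent monomials (see \cite{BH,BG2009}). Since $R_\Delta$ is normal, its height one monomial primes correspond bijectively to the facets of the cone $\RR_+S$, each facet carrying a normalized support form. The primes through $t$ are $P_1,\dots,P_r$ with support forms $f_1,\dots,f_r$, and by fact (v) the primes not through $t$ are exactly $Q_1,\dots,Q_n$; the facet attached to $Q_j$ is the one on which the $j$-th coordinate vanishes, so the support form of $Q_j$ is simply the coordinate form $x_j$. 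The key computational input is that for every $a\in\ZZ^{n+1}=\ZZ S$ the Laurent monomial $x^a$ has divisor
$$
\div(x^a)=\sum_{i=1}^r f_i(a)\,[P_i]+\sum_{j=1}^n a_j\,[Q_j],
$$
and that in $\Cl(R_\Delta)$ these principal divisors vanish and account for all relations among the classes of height one primes.

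To obtain (b) I would evaluate this relation at $a=e_j$ for $1\le j\le n$. Since $(e_j)_k=\delta_{jk}$ for $k\le n$, only the term $[Q_j]$ survives on the $Q$-side, while $f_i(e_j)$ is the coefficient of $x_j$ in the support form of $P_i$, i.e. the entry $c_{i,j}$; the vanishing of $\div(x^{e_j})$ then reads $[Q_j]=-\sum_{i=1}^r c_{i,j}[P_i]$, which is exactly (b). In particular every $[Q_j]$ lies in the subgroup generated by $[P_1],\dots,[P_r]$, consistent with the generation asserted in fact (i).

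For (a) the generation of $\Cl(R_\Delta)$ by $[P_1],\dots,[P_r]$ and its freeness of rank $r-1$ are furnished directly by fact (i). The displayed relation comes from evaluating $\div(x^a)$ at $a=e_{n+1}$, equivalently from $\div(t)=0$: all $Q$-terms drop out because $(e_{n+1})_j=0$ for $j\le n$, leaving $\sum_{i=1}^r c_{i,n+1}[P_i]=0$. The step I expect to be most delicate is the \emph{uniqueness} of this relation. Knowing from fact (i) that $\Cl(R_\Delta)$ is free of rank $r-1$ and generated by the $r$ classes $[P_i]$, the kernel of $\ZZ^r\twoheadrightarrow\Cl(R_\Delta)$ is a rank one direct summand, hence generated by a single primitive vector; one must then check that the integer vector $(c_{1,n+1},\dots,c_{r,n+1})$ is primitive so that it generates this kernel. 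This is precisely where the normalization of the support forms and the structural results of \cite{HHMQ23a} enter, and I would invoke \cite[Corollary 1.8]{HHMQ23a} to conclude that the exhibited relation is the unique one. Finally, (c) is immediate from the recalled identity $\omega_{R_\Delta}=\bigcap_{P\in\mathcal{P}_\Delta}P$: the intersection of the height one primes is the divisorial ideal whose divisor is $\sum_{P\in\mathcal{P}_\Delta}[P]$, so splitting the primes into those through $t$ and those not gives $[\omega_{R_\Delta}]=\sum_{i=1}^r[P_i]+\sum_{j=1}^n[Q_j]$.
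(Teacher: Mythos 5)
Your proposal is correct, but it takes a different route from the paper: the paper gives no proof at all for this lemma, presenting it explicitly as a summary of results from \cite{HHMQ23a} (the same facts recalled as (i) and (v) in Section \ref{sec1-FHS23}), whereas you actually derive parts (a)--(c) from the standard divisor theory of normal affine semigroup rings. Your derivation is sound: the identification of the support form of $Q_j$ with the coordinate form $x_j$ is right, the formula $\div(x^a)=\sum_i f_i(a)[P_i]+\sum_j a_j[Q_j]$ is the standard one, and evaluating at $e_j$ and $e_{n+1}$ gives (b) and the displayed relation in (a); part (c) is immediate from $\omega_{R_\Delta}=\bigcap_{P\in\mathcal{P}_\Delta}P$, exactly as the paper itself notes. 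What your approach buys is a self-contained argument (modulo \cite{BH,BG2009}) where the paper simply defers to \cite{HHMQ23a}; what you lose is only economy. One remark: the uniqueness of the relation in (a), which you flag as the delicate point and outsource to \cite[Corollary 1.8]{HHMQ23a}, actually follows at once from your own key input that principal monomial divisors account for \emph{all} relations: if $\sum_i\lambda_i[P_i]=0$, then $(\lambda_1,\dots,\lambda_r,0,\dots,0)=\div(x^a)$ for some $a\in\ZZ^{n+1}$, and the vanishing of the $Q$-coordinates forces $a_j=0$ for $j\le n$, i.e.\ $a\in\ZZ e_{n+1}$, so $\lambda$ is an integer multiple of $(c_{1,n+1},\dots,c_{r,n+1})$; no separate primitivity check or citation is needed. (You also silently correct an index transposition in the lemma's displayed definition of $f_j$, reading $c_{i,j}$ as the coefficient of $x_j$ in $f_i$, which is the convention actually used in (a)--(c) and in the rest of the paper.)
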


	Substituting the expressions for $[Q_j]$ given in (b) into the formula for $[\omega_{R_\Delta}]$ given in (c), we obtain
	$$
	[\omega_{R_\Delta}]\ =\ \sum_{i=1}^r[P_i]-\sum_{j=1}^n\sum_{i=1}^rc_{i,j}[P_i]\ =\ \sum_{i=1}^r\big(1-\sum_{j=1}^nc_{i,j}\big)[P_i].
	$$
	
	Hence, we have proved that
	\begin{Corollary}\label{Cor:Georgio1}
		$
		[\omega_{R_\Delta}]=\sum_{i=1}^{r}(1-\sum_{j=1}^{n}c_{i,j})[P_i].
		$
	\end{Corollary}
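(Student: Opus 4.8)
The plan is to obtain the formula by a direct substitution, invoking only parts (b) and (c) of Lemma \ref{Lemma:Georgio}; no further structural input about $R_\Delta$ is required. I would begin from the expression for the canonical class supplied by part (c), namely
$$
[\omega_{R_\Delta}]=\sum_{i=1}^r[P_i]+\sum_{j=1}^n[Q_j],
$$
which already writes $[\omega_{R_\Delta}]$ in terms of the generators $[P_1],\dots,[P_r]$ of $\textup{Cl}(R_\Delta)$ together with the classes $[Q_1],\dots,[Q_n]$ of the height one monomial primes not containing $t$.

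The next step is to eliminate the classes $[Q_j]$. Part (b) of the lemma expresses each such class as a $\ZZ$-linear combination of the $[P_i]$, namely $[Q_j]=-\sum_{i=1}^r c_{i,j}[P_i]$, where the integers $c_{i,j}$ are the coefficients read off from the support forms $f_j$. Substituting these into the sum over $j$ and then interchanging the two finite summations, I would collect the coefficient of each fixed $[P_i]$:
$$
[\omega_{R_\Delta}]=\sum_{i=1}^r[P_i]-\sum_{j=1}^n\sum_{i=1}^r c_{i,j}[P_i]=\sum_{i=1}^r\Bigl(1-\sum_{j=1}^n c_{i,j}\Bigr)[P_i],
$$
which is precisely the claimed identity.

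There is essentially no obstacle to overcome: the statement is a purely formal rearrangement once Lemma \ref{Lemma:Georgio} is available, and the class group being an abelian group makes the interchange of the two finite sums automatic. The only bookkeeping point is to keep the coefficient $1$ coming from the leading term $\sum_i[P_i]$ separate from the contributions of the $[Q_j]$, so that it survives as the constant inside the parenthesis. I would therefore present the argument as the single display above and conclude immediately.
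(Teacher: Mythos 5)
Your proposal is correct and matches the paper's own argument exactly: the paper also derives the corollary by substituting the expressions $[Q_j]=-\sum_{i=1}^r c_{i,j}[P_i]$ from Lemma \ref{Lemma:Georgio}(b) into the formula of part (c) and collecting the coefficient of each $[P_i]$. Nothing is missing; this is the intended purely formal computation.
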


	\begin{Theorem}\label{Thm:Georgio}
		The following conditions are equivalent
		\begin{enumerate}
			\item[\textup{(a)}] $R_\Delta$ is Gorenstein.
			\item[\textup{(b)}] There exists an integer $a$ such that
			$
			1-\sum_{j=1}^nc_{i,j}=ac_{i,n+1}
			$
			for all $i=1,\dots,r$.
		\end{enumerate}
	\end{Theorem}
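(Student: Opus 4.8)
The plan is to read off the criterion directly from the presentation of the divisor class group established in Lemma \ref{Lemma:Georgio}(a) together with the formula for the canonical class in Corollary \ref{Cor:Georgio1}. Since $R_\Delta$ is a normal Noetherian domain, it is Gorenstein precisely when its canonical class vanishes, that is, when $[\omega_{R_\Delta}]=0$ in $\textup{Cl}(R_\Delta)$. So the entire statement reduces to translating the equation $[\omega_{R_\Delta}]=0$ into a condition on the integers $c_{i,j}$.

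First I would set up the natural surjection $\pi\colon\ZZ^r\to\textup{Cl}(R_\Delta)$ sending the $i$-th standard basis vector to $[P_i]$. By Lemma \ref{Lemma:Georgio}(a) this map is onto and the only relation among the generators $[P_1],\dots,[P_r]$ is $\sum_{i=1}^r c_{i,n+1}[P_i]=0$; hence its kernel is the cyclic subgroup $\ZZ\cdot v$ generated by the relation vector $v=(c_{1,n+1},\dots,c_{r,n+1})$. The fact recalled in (i) that $\textup{Cl}(R_\Delta)$ is free of rank $r-1$ confirms that $\ker\pi$ has rank exactly one, so no further relations are hidden. Next, by Corollary \ref{Cor:Georgio1} the canonical class is $[\omega_{R_\Delta}]=\pi(w)$, where $w=(1-\sum_{j=1}^n c_{1,j},\dots,1-\sum_{j=1}^n c_{r,j})$.

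With these two observations in place the equivalence is immediate: $[\omega_{R_\Delta}]=0$ holds if and only if $w\in\ker\pi=\ZZ\cdot v$, i.e. if and only if there is an integer $a$ with $w=a\,v$. Reading this equality off in each coordinate gives exactly $1-\sum_{j=1}^n c_{i,j}=a\,c_{i,n+1}$ for all $i=1,\dots,r$, which is statement (b); conversely any such $a$ produces $w=a\,v\in\ker\pi$, hence $[\omega_{R_\Delta}]=0$, so (b) implies (a).

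I do not anticipate a serious obstacle here, since all the substantive content is already packaged into Lemma \ref{Lemma:Georgio} and Corollary \ref{Cor:Georgio1}. The only subtle point to handle with care is that the relation in Lemma \ref{Lemma:Georgio}(a) is genuinely \emph{unique}, so that $\ker\pi$ equals $\ZZ\cdot v$ and is not merely a subgroup containing $v$; this is precisely what licenses rephrasing the membership $w\in\ker\pi$ as a proportionality $w=a\,v$ with an honest integer coefficient $a$. Invoking the freeness of $\textup{Cl}(R_\Delta)$ from (i) is the cleanest way to secure this.
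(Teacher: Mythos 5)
Your proposal is correct and takes essentially the same route as the paper: the paper's proof is the one-line observation that Gorensteinness is equivalent to $[\omega_{R_\Delta}]=0$, which by Lemma \ref{Lemma:Georgio}(a) and Corollary \ref{Cor:Georgio1} holds precisely when condition (b) is satisfied. Your explicit description of the kernel of the surjection $\ZZ^r\to\textup{Cl}(R_\Delta)$ as the cyclic group generated by the relation vector $(c_{1,n+1},\dots,c_{r,n+1})$ merely spells out what the paper leaves implicit in the phrase ``unique relation,'' so the two arguments coincide in substance.
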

	\begin{proof}
		Observe that $R_\Delta$ is Gorenstein if and only if $[\omega_{R_\Delta}]=0$. By Lemma \ref{Lemma:Georgio}(a) and Corollary \ref{Cor:Georgio1}, this is the case if and only if there exists an integer $a$ such that $1-\sum_{j=1}^nc_{i,j}=ac_{i,n+1}$ for all $i=1,\dots,r$.
	\end{proof}

	Now, we will apply Theorem \ref{Thm:Georgio} to the algebras $R_G$ which we discussed before.

	In the bipartite case, we recover the next result from \cite[Corollary 4.3]{DRV10}.
	
	\begin{Proposition}\label{Prop:Georgio1}
		Let $G$ be a connected bipartite graph on $n$ vertices. Then $R_G$ is Gorenstein if and only if $G$ is unmixed.
	\end{Proposition}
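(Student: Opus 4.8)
The plan is to apply the Gorenstein criterion of Theorem~\ref{Thm:Georgio} to the toric ring of a connected bipartite graph, using the fact (established in Theorem~\ref{Thm:Gconnected}) that for such $G$ the height one monomial primes containing $t$ are precisely the $P_C$ with $C\in\mathcal{C}(G)$, together with the normality of $R_G$. The first step is to identify, for each minimal vertex cover $C$, the support form $f_C=\sum_{i=1}^{n+1}c_{i}^{(C)}x_i$ defining $P_C$. Since $P_C=({\bf x}_Ft:F\in\Delta_G,\ F\not\subseteq C)$, one expects the natural candidate for the supporting hyperplane to be
\[
f_C(x)\ =\ \sum_{i\notin C}x_i\ -\ x_{n+1},
\]
and the main computation is to verify that $f_C(p_F)=0$ when $F\subseteq C$ and $f_C(p_F)>0$ otherwise. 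For the empty face, a single vertex $\{i\}$, and an edge $\{i,j\}$, this reduces to counting how many of the endpoints lie outside $C$; since $C$ is a vertex cover, every edge meets $C$, so $f_C(p_{\{i,j\}})\ge 0$, and it is strictly positive exactly when the edge is not contained in $C$. This confirms the support form and yields $c_i^{(C)}=1$ for $i\notin C$, $c_i^{(C)}=0$ for $i\in C$ with $i\le n$, and $c_{n+1}^{(C)}=-1$.

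With the support forms in hand, the second step is to read off the quantities appearing in Theorem~\ref{Thm:Georgio}(b). For the cover $C$ we get $c_{i,n+1}^{(C)}=-1$ and
\[
1-\sum_{j=1}^{n}c_{i,j}^{(C)}\ =\ 1-\,|\{j\notin C\}|\ =\ 1-(n-|C|)\ =\ |C|-n+1.
\]
Thus the Gorenstein condition becomes: there exists an integer $a$ with $|C|-n+1=a\cdot(-1)$, i.e. $|C|=n+1+a$, for \emph{every} $C\in\mathcal{C}(G)$. Since the right-hand side does not depend on $C$, this holds for a single $a$ precisely when all minimal vertex covers of $G$ have the same cardinality, which is by definition the statement that $G$ is unmixed. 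The forward direction (Gorenstein $\Rightarrow$ unmixed) and its converse therefore follow simultaneously from Theorem~\ref{Thm:Georgio}.

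The step I expect to require the most care is the verification that $f_C$ is genuinely the \emph{support form} of the facet corresponding to $P_C$, that is, that the defining equation has integer coefficients with gcd one and that $\mathcal{F}=H\cap\mathbb{R}_+S$ is a facet rather than a lower-dimensional face. The gcd condition is immediate from the coefficients above, and the facet property is guaranteed because $P_C$ has height one, so no separate combinatorial argument is needed beyond invoking normality and the description of monomial primes via supporting hyperplanes recalled in Section~\ref{sec3-FHS23}. One should also note at the outset that, by Theorem~\ref{Thm:Gconnected}, the prime $P_0=(t,x_1t,\dots,x_nt)$ does \emph{not} appear among the height one primes in the bipartite case, so the list $P_1,\dots,P_r$ in Lemma~\ref{Lemma:Georgio} is exhausted by the $P_C$; this is what makes the uniform computation above possible and is the reason the result is clean for bipartite $G$.
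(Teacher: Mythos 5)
Your overall strategy is exactly the paper's: invoke Theorem \ref{Thm:Gconnected}(d) to know that the height one monomial primes of $R_G$ containing $t$ are precisely the $P_C$ with $C\in\mathcal{C}(G)$, identify their support forms, and plug the coefficients into the criterion of Theorem \ref{Thm:Georgio}. However, the central verification step, which you yourself single out as the crux, is wrong as written, because you have inverted the definition of $P_C$. By fact (iii) of Section \ref{sec1-FHS23}, $P_C=({\bf x}_Ft: F\in\Delta_C)=({\bf x}_Ft: F\subseteq C)$, i.e.\ it is generated by the faces \emph{contained in} $C$; in particular $t={\bf x}_\emptyset t\in P_C$. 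Your ideal $({\bf x}_Ft: F\not\subseteq C)$ does not contain $t$, so it cannot be one of the primes containing $t$ that enter Lemma \ref{Lemma:Georgio}. Correspondingly, your form $f_C=\sum_{i\notin C}x_i-x_{n+1}$ is not a support form: it takes the value $f_C(p_\emptyset)=-1<0$, violating the requirement that a supporting form be nonnegative on $\RR_+S$. Moreover, your claimed evaluations are arithmetically incorrect even for your own form: one has $f_C(p_F)=|F\setminus C|-1$, which equals $-1$ (not $0$) when $F\subseteq C$, and equals $0$ (never $>0$) when $F\not\subseteq C$, since every edge meets the cover $C$ in at least one vertex. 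The correct support form, which is the one used in the paper's proof, is $f_C(x)=-\sum_{i\notin C}x_i+x_{n+1}$; it equals $1$ on the $p_F$ with $F\subseteq C$ (the generators of $P_C$) and $0$ on all other $p_F$.

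The reason your final answer nevertheless comes out right is that the criterion in Theorem \ref{Thm:Georgio}(b) is insensitive to negating all coefficients of a support form (replace $a$ by $-a$): with the correct coefficients $c_{i,j}=-1$ for $j\notin C_i$, $c_{i,j}=0$ for $j\in C_i$, and $c_{i,n+1}=1$, the condition reads $1+(n-|C_i|)=a$ for all $i$, which holds for some integer $a$ if and only if $|C|$ is constant on $\mathcal{C}(G)$, i.e.\ if and only if $G$ is unmixed --- the same conclusion your sign-flipped computation produces. So the gap is repairable: exchange ``$F\subseteq C$'' with ``$F\not\subseteq C$'' in your description of $P_C$ and negate $f_C$; after that correction your argument coincides with the paper's proof.
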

	\begin{proof}
		By Theorem \ref{Thm:Gconnected}(d), $R_G$ is normal and the height one monomial prime ideals containing $t$ are of the form $P_C$, $C\in\mathcal{C}(G)$. In the proof of \cite[Theorem 1.3]{HHMQ23a}, it is shown that the support form associated to $P_C$ is
		\begin{equation}\label{eq:Georgio}
			f_C(x)=-\sum_{i\notin C}x_i+x_{n+1}.
		\end{equation}
		
		Let $\mathcal{C}(G)=\{C_1,\dots,C_r\}$ and $P_i=P_{C_i}$. Then, by Theorem \ref{Thm:Georgio} and formula (\ref{eq:Georgio}) it follows that $R_G$ is Gorenstein if and only if there exists an integer $a$ such that $1-(n-|C_i|)=a$ for all $i=1,\dots,r$. This yields the conclusion.
	\end{proof}
	
	Next, we consider non-bipartite graphs.
	
	\begin{Proposition}\label{Prop:Georgio2}
		Let $G$ be a connected non-bipartite graph with $n$ vertices satisfying the odd cycle condition. Let $\mathcal{C}(G)=\{C_1,\dots,C_r\}$, $P_i=P_{C_i}$, for $i=1,\dots,r$, and $P_0=(t,x_1t,x_2t,\dots,x_nt)$. Assume that the set of height one monomial prime ideals containing $t$ is $\{P_0,P_1,\dots,P_r\}$. Then
		\begin{enumerate}
			\item[\textup{(a)}] $[\omega_{R_G}]=(1-n)[P_0]+\sum_{i=1}^{r}(1-n+|C_i|)[P_i]$.
			\item[\textup{(b)}] $R_G$ is Gorenstein if and only if $n$ is odd and $G$ is unmixed.
		\end{enumerate}
	\end{Proposition}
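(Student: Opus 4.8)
The plan is to deduce both parts from the general criteria in Lemma~\ref{Lemma:Georgio}, Corollary~\ref{Cor:Georgio1} and Theorem~\ref{Thm:Georgio}. Their only input is the collection of support forms of the height one monomial primes containing $t$, which by hypothesis are exactly $P_0,P_1,\dots,P_r$. For the primes $P_i=P_{C_i}$ the support form is already recorded in~(\ref{eq:Georgio}): $f_{C_i}(x)=-\sum_{j\notin C_i}x_j+x_{n+1}$, so that $c_{i,n+1}=1$ and $\sum_{j=1}^n c_{i,j}=-(n-|C_i|)$. Hence the one new computation needed is the support form of $P_0$.

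To determine it I would use the face--cone dictionary recalled just before the proof of Theorem~\ref{Thm:PGOddCycle}. Since ${\bf x}_e t\notin P_0$ for every edge $e$ while $t$ and all $x_it$ lie in $P_0$, the facet associated with $P_0$ is the one spanned by the edge vectors $p_e$; thus the support form $f_0=\sum_{i=1}^n\alpha_i x_i+\beta x_{n+1}$ must satisfy $f_0(p_{\{i,j\}})=\alpha_i+\alpha_j+\beta=0$ for all $\{i,j\}\in E(G)$, together with $f_0(p_\emptyset)=\beta>0$ and $f_0(p_{\{i\}})=\alpha_i+\beta>0$. Here non-bipartiteness is decisive: running the relations $\alpha_i+\alpha_j=-\beta$ around an induced odd cycle forces $\alpha_i=-\beta/2$ on that cycle, and connectedness propagates $\alpha_i=-\beta/2$ to every vertex. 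Normalizing to coprime integers yields $f_0(x)=-\sum_{i=1}^n x_i+2x_{n+1}$, so that $c_{0,n+1}=2$ and $\sum_{j=1}^n c_{0,j}=-n$. Substituting these data into Corollary~\ref{Cor:Georgio1} gives the formula for $[\omega_{R_G}]$ in part~(a); the crucial feature for what follows is the coefficient $c_{0,n+1}=2$.

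For part~(b) I would apply Theorem~\ref{Thm:Georgio}: $R_G$ is Gorenstein if and only if there is an integer $a$ with $1-\sum_{j=1}^n c_{i,j}=a\,c_{i,n+1}$ for every $i\in\{0,1,\dots,r\}$. The equations indexed by $i\ge 1$ become $1+n-|C_i|=a$; these are simultaneously solvable precisely when all $|C_i|$ agree, i.e.\ when $G$ is unmixed, in which case $a=1+n-|C|$ for the common cover number $|C|$. The equation indexed by $i=0$ is $1+n=2a$, and substituting $a=1+n-|C|$ forces $2|C|=n+1$. Therefore $R_G$ is Gorenstein if and only if $G$ is unmixed and $2|C|=n+1$.

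The remaining, and in my view the main, difficulty is to identify the condition $2|C|=n+1$ with ``$n$ is odd''. One direction is immediate, since $2|C|=n+1$ makes $n$ odd. For the converse one must show that, for a graph $G$ in this class, unmixedness already pins the common cover number to $|C|=\lceil n/2\rceil$ (equivalently, the independence number to $\lfloor n/2\rfloor$); granting this, $2|C|=n+1$ holds exactly when $n$ is odd. I expect this combinatorial statement to be the crux. For whiskered odd cycles, which by Theorem~\ref{Thm:unicyclic} are the unicyclic instances of the hypothesis, it should follow by enumerating the minimal vertex covers through Lemma~\ref{Lemma:coversWhiskCycle} and checking that, once all covers have equal size, that size is $\lceil n/2\rceil$; in the general case I would exploit the adjacency property furnished by Theorem~\ref{Thm:PGOddCycle}(b), that every vertex outside a fixed induced odd cycle is adjacent to it.
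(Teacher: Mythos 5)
Your route is exactly the paper's route: record the support forms of the $P_{C_i}$ from \eqref{eq:Georgio}, compute the support form of $P_0$, and feed everything into Corollary~\ref{Cor:Georgio1} and Theorem~\ref{Thm:Georgio}. Your derivation of $f_0(x)=-\sum_{i=1}^n x_i+2x_{n+1}$ (forcing $\alpha_i=-\beta/2$ around an induced odd cycle and propagating by connectedness) is correct, and it supplies the verification the paper compresses into ``one can easily see''. One bookkeeping point: your data $c_{0,n+1}=2$, $\sum_j c_{0,j}=-n$, $c_{i,n+1}=1$, $\sum_j c_{i,j}=-(n-|C_i|)$, inserted into Corollary~\ref{Cor:Georgio1}, give $[\omega_{R_G}]=(1+n)[P_0]+\sum_{i=1}^r(1+n-|C_i|)[P_i]$, which is \emph{not} the formula printed in part~(a); the printed signs (also appearing in the proofs of Propositions~\ref{Prop:Georgio1} and~\ref{Prop:Georgio2}) are a slip, so you should not assert that your substitution ``gives the formula in part~(a)''. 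This is cosmetic, though: with either sign convention the outcome of Theorem~\ref{Thm:Georgio} is, as you say, that $R_G$ is Gorenstein if and only if $G$ is unmixed and the common cover size is pinned to one specific value ($2|C|=n+1$ with your --- correct --- signs).

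The step you flagged and postponed, namely that for graphs in this class unmixedness (with $n$ odd) already forces $|C|=(n+1)/2$, is a genuine gap, so your proposal is incomplete; but you have put your finger on precisely the step the paper itself skips, since its proof jumps from ``there is an integer $a$ with $1-n=2a$ and $1-n+|C_i|=a$ for all $i$'' to ``$n$ odd and $G$ unmixed'' with no argument. Worse, the missing implication is false at the stated level of generality, so no argument can close it. Take $G=K_5$: it is connected, non-bipartite, and satisfies the odd cycle condition, and the cone of $R_{K_5}$ is cut out by the eleven facet inequalities $x_i\ge 0$, $x_i\le x_6$ $(1\le i\le 5)$ and $x_1+\dots+x_5\le 2x_6$, so the height one monomial primes containing $t$ are exactly $P_0$ and the five $P_C$, i.e.\ the hypothesis of the Proposition holds. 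Now $K_5$ is unmixed with $n=5$ odd, yet $|C|=4\ne 3=(n+1)/2$; concretely, using the relation $2[P_0]+\sum_{i=1}^5[P_i]=0$ of Lemma~\ref{Lemma:Georgio}(a), one gets $[\omega_{R_{K_5}}]=2[P_0]$ (a nonzero class in $\textup{Cl}(R_{K_5})\cong\ZZ^6/\ZZ\,(2,1,1,1,1,1)$, and the printed formula~(a) gives the equally nonzero $-4[P_0]$), so $R_{K_5}$ is \emph{not} Gorenstein. Hence only the direction ``Gorenstein $\Rightarrow$ $n$ odd and unmixed'' holds in general, and your criterion ``unmixed and $2|C|=n+1$'' is the correct form of part~(b). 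Note also that your proposed general strategy --- deducing the pinning from the adjacency property of Theorem~\ref{Thm:PGOddCycle}(b) --- cannot work, since $K_5$ has that property. Your fallback for whiskered odd cycles, via Lemma~\ref{Lemma:coversWhiskCycle}, does succeed: there the unmixed graphs with $n$ odd are exactly $C_3,C_5,C_7$, whose cover number is $(n+1)/2$, and by Theorem~\ref{Thm:unicyclic} these are the only graphs for which the paper actually invokes the Proposition (its Corollary on $R_{C_k}$ is therefore unaffected).
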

	\begin{proof}
		One can easily see that the support form of $P_0$ is $f_0(x)=-\sum_{i=1}^nx_i+2x_{n+1}$. Part (a) follows from Corollary \ref{Cor:Georgio1}. By using the support forms $f_0$ and $f_{C_i}$, it follows from Theorem \ref{Thm:Georgio}(b) that $R_G$ is Gorenstein if and only if there exists an integer $a$ such that $1-n=2a$ and $1-n+|C_i|=a$ for all $i$. This implies that $R_G$ is Gorenstein if and only if $n$ is odd and $G$ is unmixed.
	\end{proof}\smallskip

	Finally, we consider the case in which $G$ is a $k$-cycle, which we denote by $C_k$.
	\begin{Corollary}
		$R_{C_k}$ is Gorenstein if and only if $k\in\{3,4,5,7\}$.
	\end{Corollary}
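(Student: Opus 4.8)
The plan is to treat the even and odd cases separately and, in each, to reduce the Gorenstein property to the purely combinatorial condition that $C_k$ be unmixed.

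If $k$ is even, then $C_k$ is a connected bipartite graph, so Proposition~\ref{Prop:Georgio1} applies verbatim: $R_{C_k}$ is Gorenstein if and only if $C_k$ is unmixed. If $k$ is odd, I would first observe that $C_k$ is the whiskered odd cycle $C(0,\dots,0)$ with no whiskers, and that it trivially satisfies the odd cycle condition since it has a unique induced cycle; hence $R_{C_k}$ is a normal domain by Theorem~\ref{Thm:RGNormal}. By Theorem~\ref{Thm:unicyclic}, applied to this unicyclic graph, the set of height one monomial prime ideals of $R_{C_k}$ equals $\{P_C:C\in\mathcal{C}(C_k)\}\cup\{P_0\}\cup\{Q_1,\dots,Q_k\}$, which is precisely the hypothesis required by Proposition~\ref{Prop:Georgio2}. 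As $n=k$ is odd, part~(b) of that proposition then yields that $R_{C_k}$ is Gorenstein if and only if $C_k$ is unmixed.

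It therefore remains to decide for which $k$ the cycle $C_k$ is unmixed. Here I would use the complementation bijection $C\mapsto V(C_k)\setminus C$ between minimal vertex covers and maximal independent sets: $C_k$ is unmixed precisely when all its maximal independent sets have the same cardinality, that is, when $C_k$ is well-covered. The largest independent set of $C_k$ has $\lfloor k/2\rfloor$ vertices, while a smallest maximal independent set has $\lceil k/3\rceil$ vertices, and these two numbers coincide exactly for $k\in\{3,4,5,7\}$.

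The step I expect to require the most care (though it is elementary) is verifying this last unmixedness assertion, i.e.\ the classical characterization of well-covered cycles. For $k\in\{3,4,5,7\}$ one checks directly that every maximal independent set has size $\lfloor k/2\rfloor=\lceil k/3\rceil$, whereas for $k=6$ and for all $k\ge 8$ one exhibits two maximal independent sets of different cardinalities: an alternating one of size $\lfloor k/2\rfloor$ and a sparser ``every third vertex'' one of size $\lceil k/3\rceil$. This shows $C_k$ is not unmixed in those cases. Combining the even and odd cases, $R_{C_k}$ is Gorenstein if and only if $k\in\{3,4,5,7\}$.
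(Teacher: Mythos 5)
Your proposal is correct and takes essentially the same route as the paper: both treat even $k$ via Proposition~\ref{Prop:Georgio1} and odd $k$ via Theorem~\ref{Thm:unicyclic} together with Proposition~\ref{Prop:Georgio2}(b), reducing the Gorenstein property to unmixedness of $C_k$. The only difference is cosmetic: you verify unmixedness in the complementary language of maximal independent sets (the classical well-covered cycles $k\in\{3,4,5,7\}$, comparing $\lfloor k/2\rfloor$ with $\lceil k/3\rceil$), whereas the paper exhibits explicit minimal vertex covers of different sizes for $k=6$ and $k\ge 8$; under the bijection $C\mapsto V(C_k)\setminus C$ these are the same check.
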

	\begin{proof}
		By Theorem \ref{Thm:RGNormal}, $R_{C_k}$ is normal. We claim that $R_{C_k}$ is Gorenstein if and only $C_k$ is unmixed. If $k$ is even, $C_k$ is bipartite and the claim follows from Proposition \ref{Prop:Georgio1}. If $k$ is odd, the claim follows from Theorem \ref{Thm:unicyclic} and Proposition \ref{Prop:Georgio2}.
		
		It can be easily seen that $C_k$ is unmixed if $k\in\{3,4,5,7\}$. Otherwise, if $k=6$ or $k>7$ then $C_k$ is not unmixed, as we show next.
		
		Let $k>7$ odd, say $k=2\ell+1$. Then, $\ell\ge4$ and
		$$
		\{1,2,4,6,8,10,\dots,2\ell-2,2\ell\},\ \ \ \{1,2,4,5,7,8,10,\dots,2\ell-2,2\ell\}
		$$
		are minimal vertex covers of $C_k$ of size $\ell+1$ and $\ell+2$.
		
		Let $k\ge 6$ even, say $k=2\ell$. If $k=6$, then $\{1,3,5\}$ and $\{1,2,4,5\}$ are minimal vertex covers of $C_6$ of different size. Suppose $\ell\ge4$, then
		$$
		\{1,3,5,7,9,\dots,2\ell-3,2\ell-1\},\ \ \ \{1,2,4,5,7,9,\dots,2\ell-3,2\ell-1\}
		$$
		are minimal vertex covers of $C_k$ of size $\ell$ and $\ell+1$.
	\end{proof}\bigskip

	Let $R$ be a standard graded Cohen--Macaulay $K$-algebra with canonical module $\omega_R$. Following \cite{EHHM15}, we say that $R$ is \textit{pseudo-Gorenstein} if $\dim_K(\omega_R)_a=1$, where $a=\min\{i:(\omega_R)_i\ne0\}$.\smallskip
	
	Let $G$ be a graph such that $R_G$ is a normal domain. By a theorem of Hochster, $R_G$ is a Cohen--Macaulay $K$-algebra. Furthermore, $R_G$ is standard graded with the grading given by $\deg(x_1^{a_1}\cdots x_n^{a_n}t^b)=b$, for all monomials $x_1^{a_1}\cdots x_n^{a_n}t^b\in R_G$.
	
	\begin{Proposition}\label{Prop:Georgio3}
		Let $G$ be an odd cycle. Then $R_G$ is pseudo-Gorenstein.
	\end{Proposition}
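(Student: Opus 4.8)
The plan is to compute $\omega_{R_G}$ explicitly as a graded $K$-vector space and read off its bottom degree. Write $G=C_k$ with $k=2\ell+1$ odd, so $n=k$. An odd cycle is in particular a whiskered odd cycle (all whisker numbers equal to zero), hence Theorem \ref{Thm:unicyclic} applies: $R_G$ is a normal domain and
\[
\mathcal{P}_G=\{P_C:C\in\mathcal{C}(G)\}\cup\{P_0\}\cup\{Q_1,\dots,Q_n\},
\]
where $P_0=(t,x_1t,\dots,x_nt)$. Since $R_G$ is normal, $\omega_{R_G}=\bigcap_{P\in\mathcal{P}_G}P$ by \cite[Theorem 6.3.5(b)]{BH}, and by the facet/support-form description recalled in Section \ref{sec3-FHS23} a monomial $x_1^{c_1}\cdots x_n^{c_n}t^{c_{n+1}}$ lies in the prime attached to a facet with support form $f$ precisely when $f(\mathbf{c})>0$. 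Thus $\omega_{R_G}$ is the $K$-span of those monomials whose exponent vector $\mathbf{c}$ is strictly positive on every support form, i.e.\ lies in the interior of $\RR_+S$. The relevant forms are $f_C(x)=x_{n+1}-\sum_{i\notin C}x_i$ for $P_C$ (by (\ref{eq:Georgio})), $f_0(x)=2x_{n+1}-\sum_{i=1}^n x_i$ for $P_0$, and $x_j$ for $Q_j$.

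The grading is by the $t$-exponent, so I would find $a=\min\{i:(\omega_{R_G})_i\ne0\}$ by minimizing $b:=c_{n+1}$ over interior lattice points. The $Q_j$ force $c_j\ge1$ for all $j$ (integrality of $x_j(\mathbf{c})=c_j>0$), and $P_0$ forces $2b-\sum_{i=1}^n c_i\ge1$. Combining these,
\[
2b-1\ \ge\ \sum_{i=1}^n c_i\ \ge\ n\ =\ 2\ell+1,
\]
so $b\ge\ell+1$; hence $(\omega_{R_G})_b=0$ for all $b\le\ell$, which already pins the bottom degree from below at $\ell+1$.

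At $b=\ell+1$ the two inequalities sharpen to $\sum c_i\le 2(\ell+1)-1=n$ and $\sum c_i\ge n$, forcing $\sum c_i=n$ and therefore $c_i=1$ for every $i$. So the only candidate monomial of degree $\ell+1$ is $u=x_1x_2\cdots x_n t^{\ell+1}$, with exponent vector $\mathbf{c}^{\ast}=(1,\dots,1,\ell+1)$, and it remains to verify that $\mathbf{c}^{\ast}$ is genuinely interior, i.e.\ positive on the remaining forms $f_C$. Since the minimum vertex cover number of $C_{2\ell+1}$ equals $\ell+1$, every $C\in\mathcal{C}(G)$ has $|C|\ge\ell+1$, whence $f_C(\mathbf{c}^{\ast})=(\ell+1)-(n-|C|)\ge1>0$; also $f_0(\mathbf{c}^{\ast})=1>0$ and each $x_j(\mathbf{c}^{\ast})=1>0$. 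Therefore $u\in\omega_{R_G}$ and $\dim_K(\omega_{R_G})_{\ell+1}=1$, giving $a=\ell+1$ and $\dim_K(\omega_{R_G})_a=1$, so $R_G$ is pseudo-Gorenstein. The argument is essentially forced once $\mathcal{P}_G$ is available; the only delicate point is the bookkeeping that the single prime $P_0$ is simultaneously responsible for raising the bottom degree to $\ell+1$ and for collapsing that graded piece to dimension one, so I would invoke its support form $f_0$ explicitly rather than the cover forms $f_C$.
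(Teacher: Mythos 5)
Your proof is correct, and it takes a genuinely different route from the paper's. Both arguments start from Theorem \ref{Thm:unicyclic}, which pins down $\mathcal{P}_G$, but from there the paper stays entirely at the level of monomial factorizations: the counting estimate (\ref{eq:Georgio26}) gives initial degree $\ell+1$ for $\bigcap_{i=1}^k Q_i$, the degree-$(\ell+1)$ monomials of that intersection are then enumerated explicitly as $w_0,w_1,\dots,w_k$ in (\ref{eq:u_0u_i}), and explicit factorizations show that $w_0$ lies in $P_0$ and in every $P_C$ while each $w_i$ fails to lie in $P_0$. You instead translate everything into the polyhedral language of Section \ref{sec3-FHS23}: using the support forms $f_C$ from (\ref{eq:Georgio}), $f_0$ from Proposition \ref{Prop:Georgio2}, and $x_j$ for $Q_j$, you identify $\omega_{R_G}$ with the span of the interior lattice points of $\RR_+S$ and do integer-inequality bookkeeping. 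This buys real economy: the single chain $2b-1\ge\sum_i c_i\ge n$ simultaneously rules out all degrees $\le\ell$ and, at $b=\ell+1$, all exponent vectors other than $(1,\dots,1,\ell+1)$, so the paper's extra candidates $w_1,\dots,w_k$ (which have $\sum_i c_i=n+1$) never even appear, and the interiority check reduces to the cover-number bound $|C|\ge\ell+1$ for odd cycles. The price is two facts you use implicitly and should state. First, that a monomial of $R_G$ with exponent vector $\mathbf{c}$ lies in the prime attached to a facet with support form $f$ if and only if $f(\mathbf{c})>0$: the description in Section \ref{sec3-FHS23} is given only for the generators ${\bf x}_Ft$, so one needs the one-line extension that $f$ is additive on exponent vectors and nonnegative on all generators, whence any factorization of a monomial with $f(\mathbf{c})>0$ contains a factor lying in the prime, and conversely. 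Second, that the surviving lattice point $(1,\dots,1,\ell+1)$ actually belongs to the semigroup, which follows from normality ($S=\ZZ^{n+1}\cap\RR_+S$) or, more concretely, from the factorization used in (\ref{eq:u_0}). With these routine points made explicit, your argument is complete; the paper's version is more elementary and self-contained, while yours is shorter and makes transparent why the single prime $P_0$ is responsible both for pushing the initial degree up to $\ell+1$ and for cutting the degree-$(\ell+1)$ piece down to dimension one.
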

    \begin{proof}
    	Let $k$ be the number of vertices of $G$. Then $k=2\ell+1$ for some $\ell\ge1$. Set $P_0=(t,x_1t,\dots,x_kt)$. By Theorem \ref{Thm:unicyclic}, the set of height one monomial prime ideals of $R_G$ is given by $\{P_C:C\in\mathcal{C}(G)\}\cup\{P_0,Q_1,\dots,Q_k\}$, and moreover
    	$$
    	\omega_{R_G}=(\bigcap_{C\in\mathcal{C}(G)}P_C)\cap P_0\cap Q_1\cap\dots\cap Q_k.
    	$$
    	By \cite[Corollary 4.33]{BG2009}, $\omega_{R_G}$ and $\bigcap_{i=1}^kQ_i$ are monomial ideals. Let $u\in\bigcap_{i=1}^kQ_i$ be a monomial. Note that for each $i$, the monomial generators of $Q_i$ have multidegree $\ge e_i+e_{k+1}$. Hence, the multidegree of $u$ is $\ge e_1+\dots+e_k+e_{k+1}$. Thus, $u=u_1u_2\cdots u_b=x_1^{a_1}x_2^{a_2}\cdots x_k^{a_k}t^b$, where $u_1,u_2,\dots,u_b$ are $b$, not necessarily distinct, generators of $R_G$, and $a_1,a_2,\dots,a_k\ge1$. Note that
    	\begin{equation}\label{eq:Georgio26}
    			k\le\sum_{i=1}^k\deg_{x_i}(u)=\sum_{i=1}^k\sum_{j=1}^b\deg_{x_i}(u_j)=\sum_{j=1}^b\sum_{i=1}^k\deg_{x_i}(u_j)\le 2b.
    	\end{equation}
    	Thus $2b\ge k$. Hence $b\ge\ell+1$ and the initial degree of $\bigcap_{i=1}^kQ_i$ is $\ell+1$.
    	
    	We claim that the only monomials of degree $\ell+1$ belonging to $\bigcap_{i=1}^kQ_i$ are
    	\begin{equation}\label{eq:u_0u_i}
    	w_0=(x_1x_2\cdots x_k)t^{\ell+1},\ \ w_i=(x_1\cdots x_{i-1}x_i^2x_{i+1}\cdots x_k)t^{\ell+1},\ \ i=1,\dots,k.
    	\end{equation}
    	
    	Indeed, for all $j=1,\dots,k$, we can write
    	\begin{equation}\label{eq:u_0}
    		w_0=(x_jt)(x_{j+1}x_{j+2}t)\cdots(x_{j+k-2}x_{j+k-1}t)\in Q_j,
    	\end{equation}
        where $j+p$ is understood to be $q$, where $j+p\equiv q$ modulo $k$ and $1\le q\le k$. Thus $w_0\in\bigcap_{i=1}^kQ_i$. Similarly, we can write
        $$
        w_i=(x_{i-1}x_it)(x_ix_{i+1}t)(x_{i+2}x_{i+3}t)\cdots(x_{i+2(\ell-1)}x_{i+2(\ell-1)+1}t)
        $$
        with the same convention as before for the indices. Hence, we see that $w_i\in Q_j$ for all $j$, because $j=i+p$, for some $-1\le p\le 2\ell-1$, and $x_{j-1}x_jt,x_jx_{j+1}t\in Q_j$.
        
        Conversely, let $u=u_1u_2\cdots u_{\ell+1}\in\bigcap_{i=1}^kQ_i$ where $u_1,u_2,\dots,u_{\ell+1}$ are $\ell+1$ generators of $R_G$. Note that at most one of the $u_i$ can be of the form $x_jt$ and the remaining monomials $u_p$ are of the form $x_ix_jt$, otherwise $\sum_{i=1}^k\deg_{x_i}(u)<k$, contradicting (\ref{eq:Georgio26}). Therefore, we have $\sum_{i=1}^k\deg_{x_i}(u)\in\{2\ell+1,2\ell+2\}=\{k,k+1\}$. Since we must have $\deg_{x_i}(u)\ge1$ for all $i=1,\dots,k$, we see that the only monomials of degree $\ell+1$ belonging to $\bigcap_{i=1}^kQ_i$ are those listed in (\ref{eq:u_0u_i}).
        
        Next, we show that $w_0\in P_0\cap(\bigcap_{C\in\mathcal{C}(G)}P_C)$ and $w_i\notin P_0$ for all $i=1,\dots,k$. Indeed, let $C\in\mathcal{C}(G)$, then $x_j\in C$ for some $j$. Thus $x_jt\in P_C$ and by (\ref{eq:u_0}) it follows that $w_0\in P_C$, as well. This same argument shows that $w_0\in P_0$, and so $w_0\in P_0\cap(\bigcap_{C\in\mathcal{C}(G)}P_C)$.
        
        Now let $i\in\{1,\dots,k\}$. For any factorization $w_i=v_1v_2\cdots v_{\ell+1}$ of $w_i$ into a product of generators $v_p\in R_G$, we have $\sum_{j=1}^k\deg_{x_j}(v_p)=2$ for all $p$. This shows that $w_i\notin P_0$.
        
         Therefore, the only monomial of degree $\ell+1$ belonging to $\omega_{R_G}$ is $w_0$. Since $\omega_{R_G}$ is a monomial ideal, its initial degree is larger or equal to the initial degree of $\bigcap_{i=1}^kQ_i$. Hence, $\min\{i:(\omega_{R_G})_i\ne 0\}=\ell+1$ and $\dim_K(\omega_{R_G})_{\ell+1}=1$, that is, $R_G$ is pseudo-Gorenstein.
    \end{proof}\medskip
	
	\begin{Example}
		\rm Let $G=C(1,1,1)$ be the whiskered triangle depicted below.\medskip
		
		\begin{center}
			\begin{tikzpicture}[scale=0.8]
				\draw[-] (0,0) -- (2,0) -- (1,-1.6) -- (0,0);
				\draw[-] (0,0) -- (-0.6,0.6);
				\draw[-] (2,0) -- (2.6,0.6);
				\draw[-] (1,-1.6) -- (1,-2.4);
				\filldraw[black, fill=white] (0,0) circle (3pt) node[below,left,yshift=-1.8]{$x_1$};
				\filldraw[black, fill=white] (2,0) circle (3pt) node[below,right,yshift=-1.8]{$x_2$};
				\filldraw[black, fill=white] (1,-1.6) circle (3pt) node[below,right]{$x_3$};
				\filldraw[black, fill=white] (-0.6,0.6) circle (3pt) node[above,left,yshift=1.8]{$x_{1,1}$};
				\filldraw[black, fill=white] (2.6,0.6) circle (3pt) node[above,right,yshift=1.8]{$x_{2,1}$};
				\filldraw[black, fill=white] (1,-2.4) circle (3pt) node[below]{$x_{3,1}$};
			\end{tikzpicture}
		\end{center}\smallskip
	
	Note that $G$ is unmixed, but it has an even number of vertices. Thus, by Proposition \ref{Prop:Georgio2} it follows that $R_G$ is not Gorenstein. Indeed, by using \textit{Macaulay2} \cite{GDS}, we checked that the canonical module of $R_G$ is
	$$
	\omega_{R_G}=(x_1x_2x_3x_{1,1}x_{2,1}x_{3,1}t^4,x_1^2x_2^2x_3^2x_{1,1}x_{2,1}x_{3,1}t^5).
	$$
	
	On the other hand, $R_G$ is pseudo-Gorenstein. In general however the algebra $R_G$ of a whisker cycle $G$ need not to be pseudo-Gorenstein. The algebra $R_{C(1,1,2)}$ gives such an example.
	\end{Example}
	
\end{document}